\definecolor {processblue}{cmyk}{0.96,0,0,0}
\newcommand{\inlineitem}[1][]{%
\ifnum\enit@type=\tw@
    {\descriptionlabel{#1}}
  \hspace{\labelsep}%
\else
  \ifnum\enit@type=\z@
       \refstepcounter{\@listctr}\fi
    \quad\@itemlabel\hspace{\labelsep}%
\fi}
\newtheorem{theorem}{Theorem}[section]
\newtheorem{proposition}[theorem]{Proposition}
\newtheorem{lemma}[theorem]{Lemma}
\newtheorem{corollary}[theorem]{Corollary}
\newtheorem{definition}[theorem]{Definition}
\theoremstyle{remark}
\theoremstyle{remark}
\newtheorem{remark}[theorem]{Remark}
\newtheorem{question}[theorem]{Question}
\newtheorem{conjecture}[theorem]{Conjecture}
\newcommand{\iu}{{i\mkern1mu}}
\DeclareMathOperator{\ord}{ord}
\DeclareMathOperator{\sw}{sw}
\DeclareMathOperator{\Frac}{Frac}
\DeclareMathOperator{\Def}{Def}
\DeclareMathOperator{\dsw}{dsw}
\DeclareMathOperator{\Proj}{Proj}
\DeclareMathOperator{\cond}{cond}
\DeclareMathOperator{\ASW}{ASW}
\DeclareMathOperator{\characteristic}{char}
\pgfplotsset{compat=1.16}
\begin{document}

\title{The refined local lifting problem for cyclic covers of order four}

\author{Huy Dang}
\email{huydang1130@ncts.ntu.edu.tw}
\address{National Center for Theoretical Sciences, Mathematics Division, No. 1, Sec. 4, Roosevelt Rd., Taipei City 106, Taiwan Room 503, Cosmology Building, National Taiwan University}

\classification{14H30, 14H10, 11S15.}
\keywords{lifting problem, Artin-Schreier-Witt theory, Kummer theory, Galois covers of curves, good reduction.}
\thanks{The author is supported by the Vietnam Institute for Advanced Study in Mathematics and the Simons Foundation Grant Targeted for
Institute of Mathematics, Vietnam Academy of Science and Technology.}

\begin{abstract}
Suppose $\phi$ is a $\mathbb{Z}/4$-cover of a curve over an algebraically closed field $k$ of characteristic $2$, and $\Phi_1$ is a \emph{nice} lift of $\phi$'s $\mathbb{Z}/2$-sub-cover to a complete discrete valuation ring $R$ in characteristic zero. We show that there exist a finite extension $R'$ of $R$, which is determined by $\Phi_1$, and a lift $\Phi$ of $\phi$ to $R'$ whose  $\mathbb{Z}/2$-sub-cover isomorphic to $\Phi_1 \otimes_R R'$. That result gives a non-trivial family of cyclic covers where Sa{\"i}di's refined lifting conjecture holds. In addition, the manuscript exhibits some phenomena that may shed some light on the mysterious moduli space of wildly ramified Galois covers.
\end{abstract}

\maketitle

\tableofcontents


\section{Introduction}
As the name suggests, the goal of a \emph{lifting problem} is to construct some objects in characteristic $0$ that ``lift'' the given ones in characteristic $p>0$. It is inspired by a result by Grothendieck, which says that every curve in positive characteristic lifts \cite[III]{MR2017446}. We are interested in the lifting problem for Galois covers of curves, which can be formally stated as below.

\begin{question}[({\emph{The lifting problem for covers of curves}})] Let $k$ be an algebraically closed field of characteristic $p>0$. Suppose we are given a finite group $\Gamma$ and a $\Gamma$-cover $\overline{f}: Y \rightarrow X$ of smooth projective, connected curves over $k$. Is there a $\Gamma$-cover $f: \mathscr{Y} \rightarrow \mathscr{X}$ of smooth curves over a discrete valuation ring $R$ in characteristic $0$ whose special fiber $f \otimes_R k$ is isomorphic to $\overline{f}$?
\end{question}


The answer is NOT always a yes. See \cite[\S 1.1]{MR3051249} for a counter-example. However, Oort speculated (which first appeared in 1995 on a list of questions and conjectures published in \cite[Appendix 1]{MR3971540}) that an arbitrary $\Gamma$-cover should lift when $\Gamma$ is cyclic. It was proved just recently by Obus, Wewers, and Pop in \cite{MR3194815} and \cite{MR3194816}. Moreover, Obus suggested to the author that the techniques used in \cite{MR3194815} may have the potential to confirm a more general form of the conjecture as follows.

\begin{conjecture}{\cite{MR3051252}}({\emph{The refined Oort conjecture}})
Suppose $\phi: Z \xrightarrow{} X$ is a cyclic $\Gamma$-cover of a curve over $k$, and $\phi_1: Y \xrightarrow{} X$ is its Galois sub-cover. Suppose $\Phi_1: \mathcal{Y}_R \xrightarrow{} \mathcal{X}_R$ is a lift of $\phi_1$ to a finite extension $R/W(k)$, hence in charactersitic $0$. Then there exist a finite extension $R'/R$ and a lift $\Phi: \mathcal{Z}_{R'} \xrightarrow{} \mathcal{X}_{R'}$ of $\phi$ over $R'$ that contains $\Phi_1 \otimes_R R': \mathcal{Y}_{R'} \xrightarrow{} \mathcal{X}_{R'}$ as a sub-cover.
\end{conjecture}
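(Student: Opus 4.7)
The plan is to reduce, via formal patching and the local-global principle of Bertin-Mézard and Garuti, to a purely local statement at each closed point of $Z$ where $\phi$ is wildly ramified. First, I would decompose $\Gamma$ into its prime-to-$p$ and $p$-primary parts: tame cyclic covers lift uniquely and canonically, and such lifts are automatically compatible with any prescribed lift of a sub-cover, so the problem reduces to the case $\Gamma = \mathbb{Z}/p^n$ with $\phi_1$ the $\mathbb{Z}/p^{n-1}$-sub-cover. At each wildly ramified point $z \in Z$ the task becomes: given a lift to $R$ of the local cover of formal disks $\spec k[[t]] \to \spec k[[u]]$ corresponding to $\phi_1$ at $z$, extend it, after possibly enlarging $R$ to some $R'$, to a lift of the local $\mathbb{Z}/p^n$-cover corresponding to $\phi$.

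Next, I would parametrise the local cover in characteristic $p$ by a length-$n$ Witt vector via $\ASW$ theory, with the sub-cover corresponding to truncation of the top coordinate, and parametrise candidate lifts in characteristic zero by $\Kummer$ theory after adjoining $\mu_{p^n}$. The datum $\Phi_1$ fixes the Kummer data of the bottom $n-1$ layers; the unknown is a top-layer Kummer character. Following the strategy of Obus-Wewers-Pop, the existence of a suitable top layer can be encoded as the existence of a \emph{Hurwitz tree}: a combinatorial/metric decoration of the rigid-analytic open unit disk by vertices carrying local deformation data and edges carrying differential data measured by the refined Swan conductor $\rsw$. The Hurwitz tree $\mathcal{T}_1$ attached to $\Phi_1$ is already determined, and the problem reduces to enriching $\mathcal{T}_1$ to a Hurwitz tree $\mathcal{T}$ for the full $\mathbb{Z}/p^n$-cover whose reduction is $\phi$ and whose lower $n-1$ layers recover $\mathcal{T}_1$.

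The main obstacle will be an existence theorem for this enrichment: at each vertex of $\mathcal{T}_1$ one must solve a constrained Artin-Schreier deformation equation over the corresponding affinoid, producing a character whose differential Swan conductor $\dsw$ matches the jump prescribed by $\phi$ while its restriction to every adjacent edge matches the already-fixed data coming from $\Phi_1$. The edge-compatibility constraints mean that the un-refined ``deformation data exist'' statement of \cite{MR3194815} cannot be invoked verbatim; a conditional version must be proved, making essential use of the freedom to pass to a finite extension $R'/R$ in order to extract $p$-power roots, adjust conductors, and reposition vertices. Once these local enrichments are shown to exist and to glue along the boundary, formal patching produces a global cover $\Phi: \mathcal{Z}_{R'} \to \mathcal{X}_{R'}$ lifting $\phi$ whose $\mathbb{Z}/p^{n-1}$-sub-cover is, by construction, isomorphic to $\Phi_1 \otimes_R R'$. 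I would attempt the refined existence theorem by induction on $n$, using the $\Gamma = \mathbb{Z}/4$, $p = 2$ case treated in the present paper both as the base step and as a blueprint for the inductive step, with the vanishing-cycles / Swan-conductor estimates providing the key input that no obstruction to enriching $\mathcal{T}_1$ arises.
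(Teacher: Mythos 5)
The statement you are trying to prove is stated in the paper as an open \emph{conjecture}: the paper itself offers no proof of it, and only establishes the special case $\Gamma=\mathbb{Z}/4\times\mathbb{Z}/m$ in characteristic $2$ (Theorem \ref{theoremlifttowerz4}), by completely explicit Kummer-theoretic constructions. Your proposal is therefore not comparable to any proof in the paper, and, more importantly, it is not a proof: it is a research plan whose decisive step is explicitly deferred. The reduction to the local problem and to $\Gamma=\mathbb{Z}/p^n$ is sound (it is exactly Conjecture \ref{Conjecturerefinedlocallifting} and Proposition \ref{propreducetozpn}), but everything after that concentrates the entire difficulty of the conjecture into your ``refined existence theorem'' for enriching the tree/deformation data attached to $\Phi_1$, which you acknowledge ``must be proved'' and do not prove. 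Saying that the freedom to enlarge $R$ to $R'$ will let one ``extract $p$-power roots, adjust conductors, and reposition vertices'' is not an argument: the whole point of the refined problem is that the lower-layer data coming from $\Phi_1$ are \emph{fixed} and may be arbitrarily non-generic, so the unconstrained existence results of Obus--Wewers--Pop cannot be invoked, and no mechanism is given for why the constrained system is always solvable.

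There is also a concrete reason why your plan to use the present paper ``as the base step and as a blueprint for the inductive step'' will not carry over to odd $p$ or to higher towers. The paper's construction leans on Propositions \ref{proprepresentationoforder2} and \ref{propASchar2versaldeformation}: in characteristic $2$, \emph{every} lift of a $\mathbb{Z}/2$-cover lies in one explicit universal family (the analog of the Oort--Sekiguchi--Suwa component), so the fixed lift $\Phi_1$ can be written in the normal form (\ref{eqnZ/2liftreducedform}) and the extension $\Phi$ can be written down by hand (Propositions \ref{propminZ4lift}, \ref{propcaseq2smallerq1}, \ref{propcaseq2greatq1}). Remark \ref{remarkOSScomponent} states that this phenomenon is unique to $p=2$: for odd $p$ there are lifts outside the OSS component, with other Hurwitz data, so there is no single explicit normal form for $\Phi_1$ on which to build the enrichment, and your vertex-by-vertex solvability claim would have to handle all of these. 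In short, your proposal correctly identifies the shape of a possible attack, but the key lemma it requires is precisely the open problem, and the one known case does not supply the inductive engine you hope for.
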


Thanks to a local-to-global principle (see \cite[Proposition 2.22]{2020arXiv201013614D}, \cite[\S 3]{MR1424559}), it suffices to show that the following local version holds.

\begin{conjecture}
\label{Conjecturerefinedlocallifting}
Let $k[[z_2]]/k[[x]]$ be a cyclic $\Gamma$-extension. Suppose we are given a discrete valuation ring $R$ in characteristic zero and a lift $R[[Z_2]]/R[[X]]$ of a Galois sub-extension $k[[z_1]]/k[[x]]$. Then there exist a finite extension $R'$ of $R$ and a $\Gamma$-extension $R'[[Z_2]]/ \allowbreak R'[[X]]$ that lifts $k[[z_2]]/k[[x]]$ and contains $R'[[Z_2]]/R'[[X]]$ as a sub-extension.
\end{conjecture}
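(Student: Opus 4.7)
The plan is to adapt the Obus-Wewers-Pop machinery developed for Oort's conjecture to the refined setting in which the lift of the sub-cover is prescribed, specializing to the case $\Gamma = \mathbb{Z}/4$, $p = 2$ that is the focus of this paper. First, I would describe the given cover $k[[z_2]]/k[[x]]$ using Artin-Schreier-Witt theory as a class of length-two Witt vectors $(f_1, f_2) \in W_2(k((x)))/\wp W_2(k((x)))$, so that the $\mathbb{Z}/2$-sub-extension $k[[z_1]]/k[[x]]$ is cut out by $f_1$ modulo $\wp k((x))$. On the characteristic-zero side, after possibly enlarging $R$ so that it contains a primitive fourth root of unity, the given lift $\Phi_1$ may be put into Kummer form $Z_1^2 = F_1$ with $F_1 \in R[[X]]^\times$. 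Any $\mathbb{Z}/4$-lift $\Phi$ containing $\Phi_1$ must then admit, over a suitable finite extension $R'$, a Kummer presentation of the form $Z_2^4 = F_1 \cdot G$ for a unit $G$ in $R'[[X]][Z_1]/(Z_1^2 - F_1)$ that remains to be constructed.

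The core task is to produce $G$, and if necessary enlarge $R$ to $R'$, so that the resulting degree-$4$ cover has good reduction whose special fiber is isomorphic to $\phi$. Following the Hurwitz-tree techniques of Henrio, Brewis-Wewers, and Obus-Wewers, I would attach to $\Phi_1$ its decorated Hurwitz tree $\mathcal{T}_1$, which encodes the geometry of the bad reduction of $\Phi_1$; any lift $\Phi$ of $\phi$ extending $\Phi_1$ then has a Hurwitz tree $\mathcal{T}$ sitting over $\mathcal{T}_1$, carrying additional differential and ramification data dictated by $f_2$. The problem thus splits into two pieces: first, show that over $\mathcal{T}_1$ one can \emph{prescribe} compatible top-layer Hurwitz data matching the refined Swan conductor and ramification filtration of $\phi$; and second, \emph{realize} this prescribed tree by an actual Kummer equation over an explicit finite extension $R'/R$.

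The realization step should be fairly routine via Sekiguchi-Suwa-style deformation theory for $\mathbb{Z}/p^2$-covers, combined with Henrio's local existence results at each edge of $\mathcal{T}$; it is here that the finite extension $R'/R$ enters, to accommodate the ramification demanded by the thickness data on the edges. The main obstacle, I expect, lies in the first step: unlike the unrefined Oort conjecture, where one is free to choose the full Hurwitz tree, here the shape of $\mathcal{T}_1$ and the positions, conductors, and differential data at its leaves are already fixed by $\Phi_1$, leaving much less room to maneuver. One must therefore verify that these forced sub-tree invariants are compatible with \emph{some} admissible top layer whose associated refined Swan conductor matches $f_2$. I anticipate this compatibility can be handled by a case analysis on the upper ramification break of $\phi$, with the compatibility conditions translated into congruences on $G$ modulo successive powers of the uniformizer of $R'$, whose solvability then dictates the minimal $R'/R$ of the statement.
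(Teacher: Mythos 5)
Your starting point coincides with the paper's: encode $\phi$ as a length-two Witt vector $(f_1,f_2)$, put $\Phi_1$ in Kummer form after adjoining a fourth root of unity, and seek the extension as $W_2^2 = W_1\cdot G_2$. But from there the proposal diverges into a Hurwitz-tree program whose two load-bearing steps are asserted rather than carried out, and it misses the one structural fact that makes this case provable. The paper's proof hinges on Proposition \ref{proprepresentationoforder2} (amplified in Proposition \ref{propASchar2versaldeformation}): in residue characteristic $2$, \emph{every} lift of the $\mathbb{Z}/2$-cover $y^2-y=x^{-m_1}$ can be put in the explicit normal form $W_1^2 = 1+4XG/\prod_{i}(X-v_i)^2$ with $\nu(v_i)>0$ and $G\equiv 1$ --- equivalently, the versal deformation space consists of a single (Oort--Sekiguchi--Suwa) component. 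This is precisely what collapses the ``forced sub-tree invariants'' you worry about into one concrete shape; the paper then writes down $G_2$ explicitly (a minimal term $G_{2,\min}$ as in Proposition \ref{propminZ4lift}, plus correction terms in Propositions \ref{propcaseq2smallerq1} and \ref{propcaseq2greatq1}), and certifies good reduction not by realizing a tree but by the refined-Swan-conductor calculus (Theorem \ref{theoremswanorderpmixedchar}, Lemma \ref{lemmacombination}) together with the branch-point count of Proposition \ref{propvanishingcycle}. The paper's own remark after Theorem \ref{theoremlifttowerz4} flags this normal form as the feature unique to characteristic $2$; without identifying it, your ``compatibility'' step is not a case analysis on the upper break --- it is the refined lifting problem itself.

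Two further points would cause your plan to fail as written. First, the realization step is not ``fairly routine'': Henrio's local existence results concern $\mathbb{Z}/p$, and realizing a prescribed $\mathbb{Z}/p^2$ Hurwitz tree whose bottom layer is \emph{fixed in advance} is exactly the hard constraint here; Obus--Wewers only realize trees arising from their own induction, where the lower-level data may be chosen freely. Second, $\Phi_1$ has good reduction by hypothesis, so there is no ``geometry of the bad reduction of $\Phi_1$'' to encode; the relevant datum is the configuration of the $m_1+1$ generic branch points, which the normal form captures through the $v_i$, and which yields the explicit answer $R'=R_{\Phi_1}[\iu]$ demanded by the theorem (your ``solvability of congruences'' leaves $R'$ undetermined). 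In short: right framework and right first move, but the proposal defers the two genuinely hard steps to anticipated machinery that does not apply off the shelf, and omits the characteristic-$2$ classification of $\mathbb{Z}/2$-lifts on which the actual proof rests.
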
  

That is, one can always fill in the commutative diagram in Figure \ref{fig:diagramconjecture}
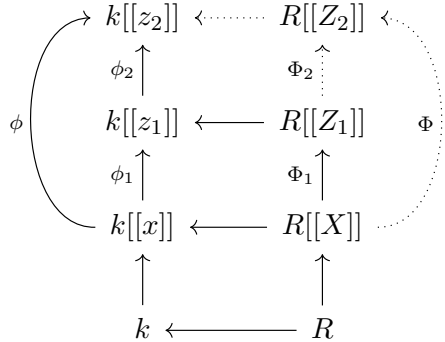
\begin{figure}[h]
    \centering
    \begin{equation*}
\begin{tikzcd}
k[[z_2]]  
& R[[Z_2]]    \arrow[l, dotted] \\
k[[z_1]] \arrow{u}{\phi_2} 
& R[[Z_1]] \arrow[u, dotted, "\Phi_2" left] \arrow[l]  \\
k[[x]] \arrow[u, "\phi_1"] \arrow[uu, bend left=90, "\phi"]    & R[[X]] \arrow[l] \arrow[u, "\Phi_1"] \arrow[uu, dotted, bend right=90, "\Phi"] \\
 k \arrow[u]   &  R \arrow[l] \arrow[u]
\end{tikzcd}
\end{equation*}
    \caption{The refined local lifting problem}
    \label{fig:diagramconjecture}
\end{figure}
where $R$ is a finite extension of the Witt vector $W(k)$, each column is a tower of cyclic extensions, and the group actions on each row are ``compatible'' in the obvious sense. We call Conjecture \ref{Conjecturerefinedlocallifting} the \emph{refined local lifting problem}. Furthermore, as in the standard lifting problem, we may reduce our study to the case when the cyclic group $\Gamma$ is of order $p^n$.
\begin{proposition}
\label{propreducetozpn}
Suppose $\Gamma=\mathbb{Z}/p^n \times \mathbb{Z}/m$, where $m$ is prime to $p$. Then Conjecture \ref{Conjecturerefinedlocallifting} holds for $\Gamma$ if and only if it holds for $\mathbb{Z}/p^n$.
\end{proposition}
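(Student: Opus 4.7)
The proposition is a reduction from $\Gamma$-covers to their wild Sylow part. The key algebraic fact is that any cyclic group $\Gamma$ of order $p^n m$ with $\gcd(p,m)=1$ decomposes as the internal direct product $\Gamma = P \times T$ of its $p$-Sylow $P \cong \mathbb{Z}/p^n$ and its prime-to-$p$ complement $T \cong \mathbb{Z}/m$. By coprimality, every cyclic $\Gamma$-extension of $k[[x]]$ (respectively of $R[[X]]$) splits canonically as the compositum of its unique $P$-subextension and its unique $T$-subextension, because extensions of coprime degree are automatically linearly disjoint. This applies equally to the given sub-cover $\phi_1$, whose Galois group is a quotient of $\Gamma$ of the form $\mathbb{Z}/p^i \times \mathbb{Z}/d$ with $i \le n$ and $d \mid m$, and to the given lift $\Phi_1$.

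For the direction Conjecture~\ref{Conjecturerefinedlocallifting} for $\mathbb{Z}/p^n$ implies Conjecture~\ref{Conjecturerefinedlocallifting} for $\Gamma$, the plan is: decompose $\phi = \psi \otimes_{k[[x]]} \tau$ and $\phi_1 = \psi_1 \otimes_{k[[x]]} \tau_1$ into $p$- and tame parts, and likewise decompose $\Phi_1 = \Psi_1 \otimes_{R[[X]]} T_1$. Invoking the hypothesis for $\mathbb{Z}/p^n$ produces a lift $\Psi$ of $\psi$ extending $\Psi_1$ over some finite $R'/R$. Classical Kummer theory then produces a lift $T$ of $\tau$ extending $T_1$ over a further finite extension $R''/R'$: if $T_1$ corresponds to $Y_1^{m/e} = X\cdot u$, one simply adjoins $Y := Y_1^{1/e}$ after enlarging $R''$ to contain $\mu_e$. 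The $\Gamma$-lift is the compositum $\Phi := \Psi \otimes_{R''[[X]]} T$, which by construction contains $\Phi_1 \otimes_R R''$ as a sub-cover. The reverse direction is the dual construction: given a $\mathbb{Z}/p^n$-cover $\psi$ and a sub-cover lift $\Psi_1$, tensor with the canonical tame cover $y^m = x$ (with its canonical lift $Y^m = X$, after adjoining $\mu_m$) to build a $\Gamma$-cover together with an induced sub-cover lift; apply the hypothesis for $\Gamma$, and extract the unique $P$-subextension of the resulting $\Gamma$-lift.

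The only nontrivial technical check is that the compositum $S := \Psi \otimes_{R''[[X]]} T$ is again of the shape $R''[[Z_2]]$ that the statement of the refined local lifting problem demands. Since $\Psi$ and $T$ are totally ramified extensions of $R''[[X]]$ of coprime degrees $p^n$ and $m$, they are linearly disjoint, so $S$ is a two-dimensional complete local domain, finite and flat over $R''[[X]]$, with both the generic and the special fibers Galois cyclic $\Gamma$-covers of one-dimensional regular rings, and hence both regular. This forces $S$ itself to be a regular local ring, whence Cohen's structure theorem identifies it with $R''[[Z_2]]$ for a suitable uniformizer $Z_2$. The tower $R \subset R' \subset R''$ is finite and all compatibility of the Galois actions under the two tensor products is automatic from the coprimality. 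The only genuine difficulty --- lifting the wild $p^n$-piece --- is precisely what the $\mathbb{Z}/p^n$-hypothesis is designed to isolate, and the main obstacle in this proposition itself is therefore the verification that no regularity is lost when passing to the compositum.
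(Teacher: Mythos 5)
Your argument is correct and is essentially the one the paper has in mind: the paper gives no details of its own, deferring to the cited Proposition 6.3 of Obus's survey and Proposition 2.30 of the equal-characteristic analogue, both of which run exactly this wild--tame decomposition (split $\Gamma$ into its Sylow parts, lift the tame $\mathbb{Z}/m$-piece by Kummer theory after adjoining $\mu_m$, lift the wild piece by hypothesis, and take the compositum). The only step worth writing out more explicitly is the claim that the special fibre of the compositum is again a power-series ring $k[[z_2]]$ --- this is not automatic for composita (it is precisely what fails for two wild pieces) but here reduces to the elementary computation that $k[[z_\psi]]\otimes_{k[[x]]}k[[y]]$ with $y^m=ux$ is already a discrete valuation ring because $\gcd(m,p^n)=1$, after which your regularity-plus-Cohen argument goes through.
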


\begin{proof}
One can simply generalize the argument of \cite[Proposition 6.3]{MR3051249}. See also \cite[Proposition 2.30]{2020arXiv201013614D}, which proves an analog of Proposition \ref{propreducetozpn} in the equal characteristics case.
\end{proof}

In this paper, we prove the conjecture for the most simple tower in characteristic $2$.

\begin{theorem}
\label{theoremlifttowerz4}
Suppose $\Gamma=\mathbb{Z}/4 \times \mathbb{Z}/m$, where $m$ is odd. Then conjecture \ref{Conjecturerefinedlocallifting} holds in characteristic $2$ when $\Phi_1$ is nice, and $R'$ is determined by $\Phi_1$.
\end{theorem}
Due to Proposition \ref{propreducetozpn}, it suffices to consider the case $m=1$. In particular, suppose we are given a $\mathbb{Z}/4$-cover $\phi$ and a lift $\Phi_1$ of its $\mathbb{Z}/2$-sub-cover $\phi_1$ as in Figure \ref{fig:diagramconjecture}. Then Theorem \ref{theoremlifttowerz4} holds if one can always construct a $\mathbb{Z}/4$-cover $\Phi$ of $R'[[X]]$ that lifts $\phi$ and whose $\mathbb{Z}/2$-subcover is isomorphic to $\Phi_1 \otimes_R R'$ as the diagram suggests. 

\begin{remark}
    The term ``nice lifts'' for $\mathbb{Z}/2$-covers is defined in Definition \ref{defniceZ2}. It is important to note that the ``nice'' assumption makes our result weaker than that of Obus and Wewers \cite[Theorem 3.4 (i)]{MR3194815} as discussed in Remark \ref{remarkconnectionOW}. However, our proof significantly differs from theirs. 
    
    We plan to replace this manuscript with a joint work with Obus that utilizes the Hurwitz tree technique and removes the ``nice'' assumption.
\end{remark}

\begin{remark}
    The main obstacle that prevents us from extending Theorem \ref{theoremlifttowerz4} is generalizing \S \ref{secgeneralforms}, which shows that a lift of a $\mathbb{Z}/2$-cover can be explicitly described. That phenomenon is actually unique to characteristic $2$ as discussed in Remark \ref{remarkOSScomponent}. Furthermore, the construction in \S \ref{secthelift} is quite explicit. Our ability to do so is thanks to the small characteristic of the field, which enables us to systematically address all potential issues.
\end{remark}

\subsection{Acknowledgements}
The author thanks Andrew Obus for helpful discussions and carefully proofreading an earlier version of this paper. He thanks Nathan Kaplan, Joe Kramer-Miller, and the University of California, Irvine's Department of Mathematics for their hospitality when he got stranded in Orange County amid the Covid-19 pandemic. The author
is grateful to the Institute of Mathematics of the Vietnam Academy of Science and Technology and the Vietnam Institute for Advanced Study in Mathematics for the excellent working conditions they provide. This work is funded by the Vietnam Institute for Advanced Study in Mathematics and the Simons Foundation Grant Targeted for the Vietnam Academy of Science and Technology.

\section{Geometric set up}
\label{secsetup}

Recall that $\phi$ is a $\mathbb{Z}/4$-extension $k[[z_2]]/k[[x]]$, where $k$ is an algebraically closed field of characteristic $2$. One may associate with $\phi$ a unique \emph{Katz-Gabber cover}, a.k.a., \emph{HKG cover} (see \cite{MR579791} and \cite{MR867916}) $W_2 \xrightarrow[]{\phi} \mathbb{P}^1_k \cong \Proj k[x,w]$ that is
\begin{itemize}
    \item totally ramified above $x=0$, and such that
    \item the formal completion above $0$ yields $k[[z_2]]/k[[x]]$.
\end{itemize}
From now on, we will identify $\phi$ with its corresponding HKG-over. Theorem \ref{theoremlifttowerz4} is thus equivalent to the following version for one-point-covers. It is not only compatible with the language in \cite{MR3194815} but also allows us to deal with rational functions instead of Laurent series.

\begin{proposition}
\label{propmainonepointcover}
Suppose $k[[z_2]]/k[[x]]$ is a $\mathbb{Z}/4$-Galois extension, and $\Phi_{1}: Y_{1}\xrightarrow{} C:=\mathbb{P}^1_K$ is a $\mathbb{Z}/2$-cover with the following properties.
\begin{enumerate}[label=(\arabic*)]
    \item \label{propmainonepointcover1} The cover $\Phi_1$ has good reduction with respect to the standard model $\mathbb{P}^1_R$ of $C$ and reduces to a $\mathbb{Z}/2$-cover $\phi_{1}: \overline{Y}_1 \xrightarrow{} \overline{C} \cong \mathbb{P}^1_k$ that is totally ramified above $x=0$ and {\'e}tale elsewhere.
    \item \label{propmainonepointcover2} The completion of $\phi_1$ at $x=0$ yields $k[[z_1]]/k[[x]]$, the unique $\mathbb{Z}/2$-sub-extension of the given extension $k[[z_2]]/ \allowbreak k[[x]]$.
\end{enumerate}
Then $\Phi_1$ extends to a $\mathbb{Z}/4$-cover $\Phi_2: Y_2 \xrightarrow{} C$ with good reduction such that
\begin{enumerate}[label=(\alph*)]
    \item \label{propmainonepointcover3} Its reduction $\phi_2: \overline{Y}_2 \xrightarrow{} \overline{C}$ is totally ramified above $\overline{x}=0$, {\'e}tale everywhere else, and
    \item \label{propmainonepointcover4} The completion of $\phi_2$ at $x=0$ yields $k[[z_2]]/k[[x]]$.
\end{enumerate}
\end{proposition}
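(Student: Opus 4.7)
The plan is to reduce to the case $m=1$ via Proposition \ref{propreducetozpn}, then construct $\Phi_2$ explicitly as a degree-two Kummer extension of $\Phi_1$ over a suitable finite extension $R'/R$, and to verify good reduction by a direct analysis of the branch locus and different on the two fibers.

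First, I would invoke the classification of lifts of $\mathbb{Z}/2$-covers in characteristic $2$ developed in \S \ref{secgeneralforms}: the Artin--Schreier cover $k[[z_1]]/k[[x]]$ has the form $z_1^2-z_1=\overline{f}(x)$ for some $\overline{f}$ whose pole order at $0$ is the conductor, and any lift $\Phi_1$ to $R[[X]]$ admits an explicit Kummer model $Z_1^2=F(X)$ with $F$ in a controlled normal form (a product of Eisenstein-type factors coming from the branch points of $\Phi_1$ on the generic fiber). In particular, this pins down where the generic-fiber branch points of $\Phi_1$ sit and how they coalesce on the special fiber to produce the single wild branch point at $x=0$.

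Next, I would use Artin--Schreier--Witt theory on the characteristic-$2$ side to present $\phi$ by a length-two Witt vector $(\overline{f}_1,\overline{f}_2)$, so that the upper layer $k[[z_2]]/k[[z_1]]$ is an Artin--Schreier cover whose defining function naturally involves $z_1$. The candidate lift $\Phi_2$ is then built as a Kummer extension $Z_2^2=G(X,Z_1)$ of $\Phi_1$, where $G$ is constructed so that its reduction modulo the maximal ideal of $R'$ produces the correct Artin--Schreier class $\overline{f}_2$ on the special fiber (after the usual identification $1-\tfrac{G}{\text{unit}}\equiv \pi\cdot\overline{f}_2 \pmod{\pi^2}$ with $\pi$ a uniformizer), and so that its Kummer branch locus on the generic fiber has the predicted numerical invariants.

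The main obstacle, which I expect to occupy \S \ref{secthelift}, is showing that such a $G$ can actually be produced \emph{with} good reduction and the prescribed ramification at $x=0$. The difficulty is twofold: one must solve an Artin--Schreier-type congruence in the ring $R'[[X]][Z_1]/(Z_1^2-F(X))$ to realise $\overline{f}_2$ as a reduction, and one must rule out spurious branch points on the special fiber. I would handle both by tracking Kato's refined Swan conductor (equivalently, the deformation data of \cite{MR3194815}) at the boundary $x=0$ and requiring that all auxiliary generic-fiber branch points of $\Phi_2$ coalesce into that boundary, which forces a matching condition on the different. The freedom to enlarge $R$ to $R'$ is essential here: it lets us adjoin the roots needed to rewrite $G$ in good-reduction form and to place Kummer parameters in the correct residue class. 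Because we are in characteristic $2$ with Witt length $n=2$, the relevant Witt addition is short enough that the required modification of $G$ can be written down explicitly rather than merely obstruction-theoretically, which is precisely the feature exploited in the remainder of the paper.
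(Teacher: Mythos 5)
Your outline follows the paper's own route essentially step for step: the explicit Kummer normal form for $\Phi_1$ from Proposition \ref{proprepresentationoforder2}, the ansatz $W_2^2 = W_1\cdot G_2$ of Theorem \ref{theoremZ/4liftintowersexplicit}, verification of the special fiber via refined Swan conductors (Theorem \ref{theoremswanorderpmixedchar}, Lemma \ref{lemmacombination}), and the good-reduction check by matching the branch-point count of Proposition \ref{propvanishingcycle}. The only thing not supplied is the actual content of \S\ref{secthelift} --- the explicit rational functions $G_2$ in the cases $f_2=0$, $n_2\le m_1$, and $n_2>m_1$ --- which you correctly identify as the crux but only assert can be written down.
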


\subsection{Kummer-Artin-Schreier-Witt theory}
\label{secKASW}
Let $\mathbb{K}$ be the function field of the projective line over a field $K$. We set
\begin{equation}
    \label{eqncharacter}
    {\rm H}^1_{p^n}(\mathbb{K}):={\rm H}^1(\mathbb{K}, \mathbb{Z}/p^n).
\end{equation}
We call an element of ${\rm H}^1_{p^n}(\mathbb{K})$ a \emph{character}. As in \cite{MR3194815}, we will use such a character as a substitute for an exponent-$p^n$-extension of $\mathbb{K}$ since they are more convenient to manipulate algebraically. For instance, if $\chi_n \in {\rm H}^1_{p^n}(\mathbb{K})$, then one may regard $(\chi_n)^{p^{n-i}}$, for $1 \le i < n$, as $\chi_n$'s $\mathbb{Z}/p^{i}$-sub-cover. In addition, if $\mathbb{K}$ has characteristic $0$ (resp., characteristic $p$), then 
\begin{equation}
    \label{eqnkummerASW}
    {\rm H}^1_{p^n}(\mathbb{K}) \cong \mathbb{K}/\mathbb{K}^{p^n} \hspace{5mm} (\text{resp., }  {\rm H}^1_{p^n}(\mathbb{K}) \cong W_n(\mathbb{K})/\wp(W_n(\mathbb{K})),
\end{equation}
by Kummer theory (resp., Artin-Schreier-Witt (ASW) theory) given that $\mathbb{K}$ contains a primitive $p^n$-th root of unity.

A character $\chi \in {\rm H}^1_{p^n} (\mathbb{K})$ gives rise to a (possibly) branched Galois cover $Y \xrightarrow{f} \mathbb{P}^1_K$. A point $x \in \mathbb{P}^1_K$ is called a \emph{branch point} of $\chi$ if it is a branch point of $f$. The \emph{branching index} of $x$ is the order of the inertia group of a point on $x$'s fiber. 

\begin{definition}
With the notations above and $\characteristic(\mathbb{K})=0$, an element $F \in \mathbb{K}$ gives rise to a character $\mathfrak{K}_{p^n}(F) \in \text{\rm H}^1_{p^n}(\mathbb{K})$ defined by the Kummer equation $W_n^{p^n}=F$. In particular, if 
$$ F=\prod_{i=0}^{n-1} F_i^{p^i} \not\in \mathbb{K}^{p^n} $$
where $F_i \in \mathbb{K} \setminus \mathbb{K}^p$ or $F_i=1$ for each $i$, then the $\mathbb{Z}/p^j$-subcover ($j<n$) is precisely
$$ W_j^{p^j}= \prod_{i=0}^{j-1} F_i^{p^i} .  $$
Suppose $\characteristic (\mathbb{K})=p>0$. Then a length-$n$-Witt-vector $\underline{f}=(f_1, \ldots, f_n) \in W_n(\mathbb{K})$ gives rise to a character $\mathfrak{K}_{p^n}(\underline{f}) \in H^1_{p^n}(\mathbb{K})$ corresponding to the ASW extension
$$ \wp(y_1, \ldots, y_n)= (f_1, \ldots, f_n), $$
where $\wp$ is the ASW isogeny. See \cite[\S 26]{MR2371763} for more details. We write $\underline{f} \sim_{\ASW} \underline{g}$ if they are in the same ASW class, i.e., $\underline{g}=\underline{f}+ \wp (\underline{h})$ for some $\underline{h} \in W_n(\mathbb{K})$. We call $\chi \in H^1_{p^n}(\mathbb{K})$ an \emph{admissible character} if its branch locus, usually denoted by $\mathbb{B}(\chi)$, specializes to a single point.
\end{definition}

\subsection{Branching data of a cyclic cover in characteristic \texorpdfstring{$p$}{p}}
\label{secbranchingdata}
Recall from the previous section that we may identify a $\mathbb{Z}/p^n$-cover $\chi_n: Y_n \rightarrow \mathbb{P}^1_k$ with a (class of) length-$n$-Witt vector $$\underline{g}=(g_1, \ldots, g_2) \in W_n(k(x))/\wp(W_n(k(x))).$$  
We say $\underline{g}$ is \emph{reduced} if its entries' partial fraction decompositions have no terms that are $p$th powers. The perfectness assumption on $k$ then implies that every class of $W_n(k(x))/\wp(W_n(k(x)))$ has a unique reduced representation.

Furthermore, at each ramified point $Q_j$ above $P_j \in \mathbb{B}(\chi_n)$, $\chi_n$ induces an exponent-$p^n$-extension of complete local ring $\hat{\mathcal{O}}_{Y_n,Q_j}/\hat{\mathcal{O}}_{\mathbb{P}^1,P_j}$. Hence, it makes sense to talk about the ramification filtration of $\chi_n$ at $P_j$. Suppose the inertia group of $Q_j$ is $\mathbb{Z}/p^m$ (where $n\le m$). We say the \textit{$i$-th ramification break} of $\chi_n$ at $P_j$, denoted by $m_{j,i}$, is $-1$ for $i \le n-m$. When $i >n-m$, the $i$-th ramification break of $\chi_n$ at $P_j$ is the $(i-n+m)$-th one of $\hat{\mathcal{O}}_{Y_n,Q_j}/\hat{\mathcal{O}}_{\mathbb{P}^1,P_j}$. The following result indicates that these invariants can be deduced easily from the reduced representation.

\begin{theorem}[{\cite[Theorem 1]{MR1935414}}]
\label{theoremcaljumpirred}
With the notations above and $\underline{g}$ is reduced, we have
\begin{equation}
\label{eqnformulalowerjumpasw}
    m_{j,i}=\max\{ p^{i-l} \deg_{(x-P_j)^{-1}} (g_{l}) \mid l=1, \ldots, i\}, 
\end{equation}
for $i>n-m$.
\end{theorem}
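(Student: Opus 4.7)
The plan is to reduce the global statement to a local computation at $P_j$ and then proceed by induction on the length $n$ of the Witt vector, with the reducedness of $\underline{g}$ serving as a normal-form condition that makes the ramification breaks directly readable from the pole orders.

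First I would localize by passing to the completion of $k(x)$ at $P_j$, obtaining a local field isomorphic to $k((t))$ for a suitable uniformizer $t$. The ramification filtration at $P_j$ depends only on the image of $\underline{g}$ in $W_n(k((t)))/\wp(W_n(k((t))))$, so it suffices to work there. The key observation is that reducedness over $k(x)$ forces the pole part of each $g_l$ at $P_j$ to have all exponents coprime to $p$: a partial-fraction term of the form $c(x-P_j)^{-pm}$ is precisely a $p$-th power (up to a unit) that reducedness forbids. Let $d_l := \deg_{(x-P_j)^{-1}}(g_l)$; every positive $d_l$ is then coprime to $p$, and these are the quantities appearing on the right-hand side of (\ref{eqnformulalowerjumpasw}).

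Next I would induct on $n$. The base case $n=1$ is classical Artin--Schreier theory: for $\wp(y_1)=g_1$ with $d_1>0$ and $\gcd(d_1,p)=1$, the extension is totally ramified at $P_j$ with unique break equal to $d_1$. For the inductive step I would establish the recursion
\[ m_{j,n} \;=\; \max\bigl(\, p \cdot m_{j,n-1},\; d_n\,\bigr), \]
where $m_{j,n-1}$ is known by induction from the length-$(n-1)$ sub-cover given by $(g_1,\ldots,g_{n-1})$. Unrolling gives the formula in the statement, since $p \cdot \max\{p^{n-1-l}d_l\} = \max\{p^{n-l}d_l\}$ for $l=1,\dots,n-1$. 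The upper bound in the recursion is obtained by picking a uniformizer $\pi_{n-1}$ above $P_j$ in the $\mathbb{Z}/p^{n-1}$-level, rewriting the last coordinate of $\wp(y_1,\ldots,y_n) = \underline{g}$ as a length-one Artin--Schreier equation $\wp(y_n) = G_n$ over $k((\pi_{n-1}))$ via Witt-polynomial expansion, and computing the pole order of $G_n$ in $\pi_{n-1}$.

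The main obstacle is the matching lower bound: showing that no substitution $\underline{g}\mapsto \underline{g}+\wp(\underline{h})$ in $W_n$ can reduce the dominant pole term. This is where reducedness is indispensable. The coordinates of $\wp(\underline{h})$ are built from $h_j^{p^i}$ and Witt-carry polynomials, whose polar parts consist of $p$-th powers plus terms of strictly lower pole order, so such a substitution cannot cancel a pole of order coprime to $p$ in any $g_l$. The technical core is a careful bookkeeping of how carries in Witt vector addition interact with partial fractions, verifying that the dominant candidate $p^{i-l}d_l$ in the maximum genuinely realizes the ramification break and is not killed by any allowed simplification. Once this is secured for each $i>n-m$, the induction closes and (\ref{eqnformulalowerjumpasw}) follows.
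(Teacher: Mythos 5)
First, a point of order: the paper does not prove Theorem \ref{theoremcaljumpirred} at all --- it is imported verbatim from \cite[Theorem 1]{MR1935414} and used as a black box --- so there is no in-paper argument to measure yours against. Judged on its own, your outline follows the classical Schmid--Witt strategy (localize at $P_j$, note that reducedness forces every positive $d_l$ to be prime to $p$, induct via $m_{j,i}=\max(p\,m_{j,i-1},\,d_i)$), and that recursion is indeed the correct one --- but only for the \emph{upper-numbering} breaks, which is what the $m_{j,i}$ are here (this is what makes the identification with Swan conductors in the proof of Proposition \ref{propvanishingcycle}, and the inequality $m_2\ge 2m_1$ in the corollary, come out right). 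Your sketch never fixes the numbering, and it matters: the computation you propose for the inductive step --- writing the top level as $\wp(y_n)=G_n$ over $L_{n-1}=k((\pi_{n-1}))$ and reading off the pole order of $G_n$ --- produces the break of $L_n/L_{n-1}$ in the \emph{lower} numbering of $L_n/K$, which is $\psi_{L_{n-1}/K}\bigl(\max(p\,m_{j,n-1},d_n)\bigr)$ rather than $\max(p\,m_{j,n-1},d_n)$ itself. The Herbrand-function conversion is never mentioned, and verifying that the reduced $G_n$ has pole order exactly $\psi_{L_{n-1}/K}$ of the claimed value is precisely the Witt-carry computation you defer; as written, the recursion and the method by which you claim to establish it do not match up.

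Second, your ``matching lower bound'' aims at the wrong target. Showing that no substitution $\underline{g}\mapsto\underline{g}+\wp(\underline{h})$ with $\underline{h}\in W_n(k((t)))$ can lower the dominant pole only proves that $\max\{p^{i-l}d_l\}$ is minimal over the ASW class of $\underline{g}$ in $W_n(k((t)))/\wp$; it does not yet say that the ramification break attains this value. What is actually needed is that $G_n$ cannot be simplified modulo $\wp(L_{n-1})$ --- and the group of substitutions available over the intermediate field $L_{n-1}$ is strictly larger than the one induced by $\wp(W_n(k((t))))$ on the top coordinate --- down to a pole order below $\psi_{L_{n-1}/K}\bigl(\max(p\,m_{j,n-1},d_n)\bigr)$ and prime to $p$. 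That is where reducedness genuinely enters, and it is the step your sketch gestures at without carrying out. So the architecture is the standard one and the target recursion is correct, but the two load-bearing steps (the upper/lower conversion and the reduction of $G_n$ over $L_{n-1}$) are absent; as it stands this is a restatement of the known proof strategy rather than a proof.
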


\subsubsection{}
Let us get back to the situation of Proposition \ref{propmainonepointcover}. We associate with $\phi$ a reduced length-two-Witt vector
\begin{equation*}
    \underline{f}=(f_1, f_2) \in W_n(k(x)).
\end{equation*}
It follows from Theorem \ref{theoremcaljumpirred} that the $f_i$'s only have a pole at $x=0$. Let $(m_1, m_2) \in \mathbb{Z}^2$ be the ramification breaks at that place. The following is immediate from the same result.

\begin{corollary}
With the above notations, the following holds.
\begin{enumerate}[label={\arabic*.}]
    \item $m_1$ is odd, and
    \item $m_2 \ge 2m_1$, if $m_2>2m_1$ then $m_2$ is odd.
\end{enumerate}
\end{corollary}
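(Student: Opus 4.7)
The plan is to apply Theorem~\ref{theoremcaljumpirred} directly to the Witt vector $\underline{f}=(f_1,f_2)$ associated with $\phi$ and read off both assertions from the resulting formula together with the reducedness of $\underline{f}$. Since $\phi$ is totally ramified only above $x=0$, specializing \eqref{eqnformulalowerjumpasw} to $p=2$, $n=2$, $P_j=0$, and $m=2$ yields
$$ m_1 = \deg_{x^{-1}}(f_1) \quad \text{and} \quad m_2 = \max\{\, 2\deg_{x^{-1}}(f_1),\; \deg_{x^{-1}}(f_2)\,\}. $$
Writing $d_i := \deg_{x^{-1}}(f_i)$, the inequality $m_2 \ge 2 m_1 = 2 d_1$ is then immediate from the max.

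To obtain the parity statements, I would invoke the definition of ``reduced'' recalled at the start of \S\ref{secbranchingdata}: each $f_i$ has no $p$-power term in its partial fraction decomposition. Because $k$ is perfect and $p=2$, any pole $c/x^{2j}$ occurring in $f_1$ would be eliminated by the replacement $f_1 \mapsto f_1 - \wp(c^{1/2}/x^j)$ (which kills the $c/x^{2j}$ term while only introducing the lower order term $c^{1/2}/x^j$), and a corresponding Witt-vector shift handles $f_2$. Hence reducedness forces every pole order of $f_i$ at $x=0$ to be odd, so both $d_1$ and $d_2$ are odd whenever they are positive. In particular $m_1 = d_1$ is odd, which is assertion~(1).

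For assertion~(2), if $m_2 > 2 m_1$ then the maximum defining $m_2$ cannot be attained by $2 d_1$, so it must be attained by $d_2$; thus $m_2 = d_2$, which is odd by the parity argument just given. The only point that requires any care is that reducedness genuinely pins down the pole orders of \emph{both} Witt components (not just the leading one), but this is precisely the standard normalization underlying the uniqueness of reduced representatives over a perfect field of characteristic $p$, and is built into the hypothesis of \S\ref{secbranchingdata}; once recorded, the corollary is a one-line consequence of Theorem~\ref{theoremcaljumpirred}, so I anticipate no genuine obstacle.
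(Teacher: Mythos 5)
Your proposal is correct and follows exactly the route the paper intends: the paper simply declares the corollary ``immediate from'' Theorem~\ref{theoremcaljumpirred}, and your computation $m_1=\deg_{x^{-1}}(f_1)$, $m_2=\max\{2\deg_{x^{-1}}(f_1),\deg_{x^{-1}}(f_2)\}$ combined with the observation that reducedness over the perfect field $k$ forces odd pole orders in characteristic $2$ is precisely the omitted verification. The aside about how reducedness is achieved (the $\wp$-adjustment and the Witt-vector shift) is not needed here, since reducedness is already part of the standing hypothesis of \S\ref{secbranchingdata}, but it does no harm.
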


Furthermore, \cite[Lemma 2.1.2]{MR2016596} implies that the $\mathbb{Z}/2$-extension of the ring of power series $k[[z_1]]/k[[x]]$, is completely determined by $m_1$. Therefore, one may assume that $\phi_1$ is an Artin-Schreier cover of $\mathbb{P}^1_k$ represented by
$$ y_1^2-y_1=\frac{1}{x^{m_1}}. $$
Hence, the Witt vector associated with $\phi$ is of the form
\begin{equation}
\label{eqnphi}
    \bigg(\frac{1}{x^{m_1}},  \frac{\sum_{0 \le i <n_2} a_i x^i}{x^{n_2}} \bigg) \in W_2(k(x)),
\end{equation}
where $n_2$ is odd, $a_i \in k$,  $a_0 \neq 0$, and $a_i=0$ for $i$ odd. In addition, if $n_2>2m_1$ then $n_2=m_2$, if $n_2<2m_1$ then $m_2=2m_1$. 

\section{Degeneration of cyclic covers}
\label{secdegenerationcyclic}

\subsection{Degeneration type}
\label{secdegenerationtype}
Let us briefly introduce the notion of degeneration type, a useful invariant for an abelian local covering like ones in this paper. For more details, see, e.g., \cite[\S 5.3.1]{MR3194815} for the mixed characteristics case or \cite[\S 3.5]{2020arXiv201013614D} for the equal characteristics case.

\begin{definition}
\label{defndegenerationtype}
Suppose $\mathscr{K}$ is a valued field with valuation $\nu$, uniformizer $\pi$, and residue field $\kappa$ of characteristic $p>0$. Let $\chi \in {\rm H}^1_{p^n}(\mathscr{K})$ be a character or order $p^n$. The degeneration of $\chi$ can be measured by its associated \emph{refined Swan conductors}. Those include the followings:
\begin{enumerate}[label={\arabic*.}]
    \item The \emph{depth Swan conductor} $\sw({\chi}) \in \mathbb{Q}_{\ge 0}$, which measures the separability of $\chi$'s reduction. Particularly, it is $0$ if and only if $\chi$ is unramified with respect to $\nu$,
    \item For $\sw(\chi)>0$, the \emph{differential Swan conductor} $\dsw(\chi) \in \Omega^1_{\kappa}$, and
    \item For $\sw(\chi)=0$, a \emph{reduction} $\underline{f}=(f_1, \ldots, f_n) \in W_n(\kappa)$, which can be replaced by another vector in the same ASW class.
\end{enumerate}
We call the pair $(\sw(\chi), \dsw(\chi))$ when $\sw(\chi)>0$ (resp., $(0, \underline{f})$ when $\sw(\chi)=0$) the (resp., a) \emph{degeneration type} (resp., \emph{reduction type}) of $\chi$. 
\end{definition}

These information can be calculated easily when the character is \emph{admissible} of order $p$ and $K$ is of characteristic $0$. The equi-characteristic case is similar \cite[Proposition 3.17]{2020arXiv200203719D}.

\begin{theorem}[{\cite[Proposition 5.17]{MR3194815}}]
\label{theoremswanorderpmixedchar}
Suppose $\mathscr{K}$ is $p$-adic with valuation $\nu$, residue $\kappa$, $\nu(p)=1$, $F \in \mathscr{K}^{\times} \setminus (\mathscr{K}^{\times} )^p$ and $\nu(F)=0$. Suppose, moreover, that $\chi:=\mathfrak{K}_p(F) \in {\rm H}^1_p(\mathscr{K})$ is admissible and weakly unramified with respect to $\nu$ (see \cite{MR0321929}).
\begin{enumerate}[label=({{\alph*}})]
    \item \label{theoremswanorderpmixedchar1} We have
    $$ \sw(\chi)=\frac{p}{p-1}- \min \bigg\{ \max_H \{ \nu(F-H^p)\}, \frac{p}{p-1} \bigg\}, $$
    where $H$ ranges over all elements of $\mathscr{K}$.
    \item \label{theoremswanorderpmixedchar2} The maximum of $\nu(F-H^p)$ in \ref{theoremswanorderpmixedchar1} is achieved if and only if $$g:=[F-H^p] \not\in \kappa^p, $$
    where $[F-H^p]$ is the image in $\kappa$ of $(F-H^p)/p^{\nu(F-H^p)}$. We call $H$ a \emph{correcting function} for $F$.
    If this is the case, and $\sw(\chi)>0$, then
    \[\dsw(\chi)= \begin{cases} 
      dg/g & \text{if } \sw(\chi)=p/(p-1) \\
      dg & \text{if } 0<\sw(\chi) <p/(p-1)
   \end{cases}.
\]
       If, instead, $\sw(\chi)=0$, then its reduction $\overline{\chi}$ corresponds to the Artin-Schreier extension given by the equation $y^p-y=g$. 
\end{enumerate}
\end{theorem}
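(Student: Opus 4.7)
The plan is to reduce to a normalized Kummer extension of $p$-adic type and then invoke standard ramification theory. The key invariance is that the character $\chi = \mathfrak{K}_p(F)$ depends only on the class of $F$ in $\mathscr{K}^\times/(\mathscr{K}^\times)^p$: the substitution $W \mapsto W/H$ converts the Kummer equation $W^p = F$ into $V^p = F/H^p = 1 + (F - H^p)/H^p$. Since $\nu(F) = 0$, the relevant candidates are $H$ with $\nu(H) = 0$, and the additive distance $\nu(F - H^p)$ measures how close $F$ is to a $p$-th power in $\mathcal{O}_\mathscr{K}$.

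For part (a), after this normalization I would write $V^p = 1 + \eta$ with $a := \nu(\eta) \geq 0$ and invoke the convergence properties of the $p$-adic logarithm and exponential series. When $a \geq p/(p-1)$, these maps show that $1 + \eta$ is already a $p$-th power in $\mathscr{K}$, so the character is trivial along $\nu$ and $\sw(\chi) = 0$; when $0 \leq a < p/(p-1)$, a direct computation of the unique ramification break of the resulting wildly ramified extension yields $\sw(\chi) = p/(p-1) - a$. Optimizing over $H$, with the cap at $p/(p-1)$ since larger values give no further gain, produces the stated formula. For the first half of (b), suppose the reduction $g = [F - H^p] \in \kappa$ were a $p$-th power $\bar{h}^p$; lifting $\bar{h}$ to $h \in \mathcal{O}_\mathscr{K}$ and replacing $H$ by $H' = H + \delta$ for a suitable $\delta$, chosen via the binomial expansion of $(H')^p$ together with $\nu(p) = 1$, strictly raises $\nu(F - (H')^p)$ and contradicts maximality. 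Conversely, if $g \notin \kappa^p$, no such improvement is possible, so $H$ is a correcting character in the sense of the statement.

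For the differential Swan conductor, I would track the residual structure of the normalized equation $V^p = 1 + \eta$. In the boundary case $\sw(\chi) = p/(p-1)$, i.e., $a = 0$, the substitution $V = 1 + (\zeta_p - 1)Y$ together with $\nu(\zeta_p - 1) = 1/(p-1)$ recasts the Kummer equation into a form whose reduction modulo $\pi$ is the Artin-Schreier equation $Y^p - Y = g$ over $\kappa$, and the associated refined differential is then the logarithmic form $dg/g$. In the strictly wild range $0 < \sw(\chi) < p/(p-1)$, a rescaling by a different fractional power of a uniformizer produces the additive form $dg$ instead. The main technical obstacle will be this boundary case, where the transition between multiplicative and additive formulas requires careful accounting of valuations involving $\zeta_p - 1$; one must additionally verify that the resulting differential depends only on $\chi$ and not on the auxiliary choice of correcting character $H$, which amounts to comparing competing reductions.
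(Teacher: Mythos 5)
This statement is imported verbatim from \cite[Proposition 5.17]{MR3194815}; the paper offers no proof of its own, so the only meaningful comparison is with the argument in that source. Your overall strategy matches it: normalizing by $W\mapsto W/H$ to reduce to $V^p=1+\eta$ with $\eta=(F-H^p)/H^p$, using the $p$-adic $\log$ and $\exp$ to dispose of large $\nu(\eta)$, computing the ramification break in the range $0\le \nu(\eta)<p/(p-1)$, and running the ``improve $H$ unless $[F-H^p]\notin\kappa^p$'' argument for the first half of (b). Those pieces are sound in outline.

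There is, however, a concrete error in your treatment of the degeneration types: you have swapped the two boundary cases. The substitution $V=1+(\zeta_p-1)Y$ produces the Artin--Schreier reduction $y^p-y=g$ precisely when $a:=\nu(\eta)=p/(p-1)$, because then every term $\binom{p}{k}(\zeta_p-1)^kY^k$ with $1\le k\le p$ has valuation at least $p/(p-1)$ and dividing by $(\zeta_p-1)^p$ leaves the unit equation $Y^p-Y\equiv g$; this is exactly the case $\sw(\chi)=0$, the \emph{last} clause of (b), where no differential conductor is recorded. In the case you assign it to, $a=0$ (i.e.\ $\sw(\chi)=p/(p-1)$), that substitution fails: the right-hand side $\eta/(\zeta_p-1)^p$ has valuation $-p/(p-1)$ and nothing reduces to an Artin--Schreier equation. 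What actually happens there is that the torsor degenerates to the inseparable $\mu_p$-torsor $\bar v^p=[F/H^p]$, and the logarithmic form $dg/g$ is the associated K\"ahler differential of that multiplicative reduction, not of an Artin--Schreier equation. Relatedly, your claim that $1+\eta$ is ``already a $p$-th power'' for all $a\ge p/(p-1)$ is false at the boundary $a=p/(p-1)$ (and for $a>p/(p-1)$ it would contradict $F\notin(\mathscr{K}^\times)^p$, which is why the minimum with $p/(p-1)$ appears in (a)): at the boundary one gets $\sw(\chi)=0$ together with a nontrivial separable reduction. Finally, the identifications $\dsw(\chi)=dg/g$ and $\dsw(\chi)=dg$ are asserted rather than derived; they require unwinding Kato's definition of the refined Swan conductor for the intermediate rescalings $V=1+\pi^cY$, together with the independence of the choice of correcting $H$ that you correctly flag but do not carry out.
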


\begin{definition}
\label{defnbestform}
Suppose $\mathscr{K}$ is $p$-adic with valuation $\nu$ and residue $\kappa$. Set $\lambda:=\zeta_p-1$, which has valuation $1/(p-1)$. Suppose $F \in \mathscr{K}$, and  $\psi=\mathfrak{K}_p(F) \in {\rm H}_p^1(\mathscr{K})$ has Swan conductor $\sw(\psi)=:\delta$. We say $F$ is a \emph{best representation} for $\psi$ if (after maybe a finite extension of $\mathscr{K}$)
\begin{equation*}
    F=1+ \lambda^p\frac{G}{\pi_{\delta}},
\end{equation*}
for $G \in \mathscr{K}$, $\nu(\pi_{\delta})=\delta$, $\nu (G)=0$,  and its image $g$ in $\kappa$ is not a $p$-power.
\end{definition}

\begin{remark}
    With the notations of Theorem \ref{theoremswanorderpmixedchar} \ref{theoremswanorderpmixedchar2}, the fraction $F/H^p$ is a best representation for $\chi$.
\end{remark}

The following result demonstrates how one can compute the degeneration type of a product of characters, which will later allow us to construct $\Phi$ by adjusting a ``minimal covering''.

\begin{lemma}[{\cite[Proposition 5.9]{MR3194815}}]
\label{lemmacombination}
Let $\chi_i \in {\rm H}^1_{p^n}(\mathscr{K})$, with $i=1,2,3$ satisfying the relation $\chi_3=\chi_1 \cdot \chi_2$. Set $\delta_i:=\sw(\chi_i)$ and $\omega_i:=\dsw(\chi_i)$, for $i=1,2,3$. Then the following holds.
\begin{enumerate}[label=(\arabic*)]
    \item \label{lemmacombination1} If $\delta_1 \neq \delta_2$, then $\delta_3=\max \{\delta_1,\delta_2 \}$. Furthermore, we have $\omega_3=\omega_1$ if $\delta_1>\delta_2$ and $\omega_3=\omega_2$ otherwise.
    \item \label{lemmacombination2} If $\delta_1=\delta_2>0$ and $\omega_1+\omega_2 \neq 0$, then $\delta_1=\delta_2=\delta_3$ and $\omega_3=\omega_1+\omega_2$.
    \item \label{lemmacombination3} If $\delta_1=\delta_2>0$ and $\omega_1+\omega_2=0$, then $\delta_3 < \delta_1$.
    \item \label{lemmacombination4} if $\delta_1=\delta_2=0$, then $\delta_3=0$ and $\overline{\chi}_3=\overline{\chi}_1 \cdot \overline{\chi}_2$. Hence, if a reduction type of $\chi_1$ (resp., $\chi_2$) is $\underline{f}$ (resp., $\underline{g}$) in $W_n(\kappa)$, then $\underline{f}+\underline{g}$, which could be $\underline{0}$, is a reduction type of $\chi_3$.
\end{enumerate}
\end{lemma}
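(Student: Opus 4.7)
The plan is to deduce all four cases from one structural fact: the pair $(\sw,\dsw)$ behaves like a non-Archimedean norm on the abelian group ${\rm H}^1_{p^n}(\mathscr{K})$. Concretely, the depth Swan conductor defines a decreasing filtration by subgroups $\mathrm{Fil}^{\delta} := \{\chi : \sw(\chi) \le \delta\}$, and for each $\delta > 0$ the differential Swan conductor descends to an injective group homomorphism $\mathrm{Fil}^{\delta}/\mathrm{Fil}^{<\delta} \hookrightarrow \Omega^1_{\kappa}$. This is essentially Kato's refined Swan conductor theorem applied to $p^n$-characters, and once it is granted the lemma becomes purely formal.

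Indeed, for (1) with $\delta_1 > \delta_2$ the character $\chi_2$ lies in $\mathrm{Fil}^{<\delta_1}$, so $\chi_3 \equiv \chi_1 \pmod{\mathrm{Fil}^{<\delta_1}}$, forcing $\delta_3 = \delta_1$ and $\omega_3 = \omega_1$; the case $\delta_2 > \delta_1$ is symmetric. For (2) and (3), when $\delta_1 = \delta_2 = \delta > 0$ the homomorphism property on the graded piece gives $\dsw(\chi_1\chi_2) = \omega_1 + \omega_2$ in $\Omega^1_{\kappa}$; by the injectivity on $\mathrm{Fil}^{\delta}/\mathrm{Fil}^{<\delta}$, this sum is nonzero exactly when $\chi_3 \notin \mathrm{Fil}^{<\delta}$, recovering the dichotomy: in (2) one has $\delta_3 = \delta$ and $\omega_3 = \omega_1 + \omega_2$, whereas in (3) one has $\delta_3 < \delta$. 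Part (4) reduces similarly to the fact that the reduction map $\mathrm{Fil}^{0} \to W_n(\kappa)/\wp W_n(\kappa)$ is a group homomorphism, which is immediate from its Artin--Schreier--Witt description.

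To verify the structural input directly in the order-$p$ case over a $p$-adic $\mathscr{K}$, one can use Theorem \ref{theoremswanorderpmixedchar}. Starting from reduced forms $F_i = 1 + \lambda^{p} G_i/p^{\delta_i}$ of $\chi_i$ (cf.~Definition \ref{defnreducedform}), a direct expansion gives
\begin{equation*}
F_1 F_2 = 1 + \lambda^{p}\bigl(G_1/p^{\delta_1} + G_2/p^{\delta_2}\bigr) + \lambda^{2p} G_1 G_2/p^{\delta_1+\delta_2};
\end{equation*}
since $\nu(\lambda^{2p}) > \nu(\lambda^{p})$, the cross term contributes only to strictly deeper filtration, and tracking the $\nu$-valuation together with the image in $\kappa$ of the main term yields (1)--(3) for $n=1$ through the formulas of Theorem \ref{theoremswanorderpmixedchar}.

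The main obstacle is pushing this through for general $n$: Witt-vector addition (and the corresponding product of Kummer representatives at higher level) is not term-by-term, so the elementary expansion above is not available, and one cannot simply isolate a ``leading term'' of $\chi_1\chi_2$ from chosen representatives of $\chi_1$ and $\chi_2$. This is precisely where Kato's intrinsic definition of the refined Swan conductor---which produces $(\sw,\dsw)$ without reference to any chosen representative---is essential: it supplies the filtration and homomorphism statements uniformly in $n$, and hence the lemma in full generality.
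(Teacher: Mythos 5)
The paper does not actually prove this lemma---it is imported verbatim from \cite[Proposition 5.9]{MR3194815}---and your argument is essentially the proof given there: everything reduces formally to Kato's two structural facts, namely that $\{\chi : \sw(\chi)\le\delta\}$ is a subgroup filtration of ${\rm H}^1_{p^n}(\mathscr{K})$ and that $\dsw$ induces an injective homomorphism of each graded piece into $\Omega^1_{\kappa}$, so your deduction of (1)--(4) is correct and matches the source. The only blemish is in your optional $n=1$ sanity check (and the mislabelling of the filtration as ``decreasing''): when $\max\{\delta_1,\delta_2\}=p/(p-1)$ the cross term $\lambda^{2p}G_1G_2/p^{\delta_1+\delta_2}$ is \emph{not} of strictly larger valuation than the main term, which is exactly why that boundary case carries the multiplicative normalization $\dsw=dg/g$ in Theorem \ref{theoremswanorderpmixedchar} and requires separate (though easy) handling.
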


\subsection{A criterion for good reduction}
\label{seccriteriongoodreduction}
In this subsection, we consider Definition \ref{defndegenerationtype}'s situation with $\kappa=l(x)$, where $l$ is a perfect field of characteristic $p$. As discussed in \ref{secbranchingdata}, when $\sw(\chi)=0$, we call the reduced vector in the same ASW class of $\chi$'s reduction its \emph{reduced reduction}.

\begin{definition}
\label{defnconductor}
With the notations of Definition \ref{defndegenerationtype} and $\kappa=l(x)$, we may view $\chi$ as a $\mathbb{Z}/p^n$-cover of $\mathbb{P}^1_{\mathscr{K}}$. For each branch point $P$ of $\chi$'s generic fiber $\chi_{\eta}$, we define the \emph{conductor} of $\chi_{\eta}$ at $P$, denoted by $\cond(\chi_{\eta},P)$, as follows
\begin{equation*}
    \cond(\chi_{\eta},P)=\begin{cases} 
      1, & \characteristic(\mathscr{K})=0  \\
      \text{($\chi_{\eta}$'s highest ramification break at $P$}) +1, & \characteristic(\mathscr{K})=p \\
   \end{cases}.
\end{equation*}
When $\sw(\chi)=0$, for each pole $q$ of the reduced reduction $\underline{f}$ we define the conductor at $q$ to be the conductor at the corresponding branch point of $\mathfrak{K}_{p^n}(\underline{f}) \in {\rm H}^1_{p^n}(\kappa)$.
\end{definition}

Recall that the special fiber's highest ramification breaks can be calculated from $\underline{f}$ using Theorem \ref{theoremcaljumpirred}. One then can characterize the reduction of $\chi$ from its degeneration type as follows.

\begin{proposition}
\label{propvanishingcycle}
Suppose $\chi$ is a character in ${\rm H}^1_{p^n}(\mathscr{K})$ with residue $\kappa=l(x)$. Suppose moreover that $\sw(\chi)=0$ with $\underline{f} \in W_n(\kappa)$ the reduced reduction. Then for each pole $x_i$ of $\underline{f}$ and let $\{X^j_i\}_{j \in J}$ be the branch points of $\chi$'s generic fiber $\chi_{\eta}$ specializing to $x_i$, we have
    \begin{equation}
        \label{eqndiffvanishingcycle}
        \cond(\underline{f},x_i) \le \sum_{j \in J} \cond(\chi_{\eta}, X^j_i).
    \end{equation}
Moreover, $\chi$ has good reduction if and only if the equality holds.
\end{proposition}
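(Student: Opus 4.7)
The plan is to reduce (\ref{eqndiffvanishingcycle}) to a purely local statement at each $x_i$ and then prove it as a vanishing-cycles inequality. Because $\sw(\chi)=0$, the reduction $\underline{f}\in W_n(\kappa)$ defines a genuine $\mathbb{Z}/p^n$-cover of $\mathbb{P}^1_l$, so both sides of the inequality depend only on what happens on a formal neighborhood of $x_i$. I would first localize by completing $\mathscr{K}$ with respect to the Gauss valuation centered at $x_i$; over the resulting local base, the special fiber of $\chi$ is ramified only at $x_i$ while the generic fiber is ramified precisely at the points $\{X^j_i\}_{j\in J}$ specializing to $x_i$, and the formal-patching principle makes the contributions at distinct poles of $\underline{f}$ mutually independent.

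The core of the argument is a comparison of local contributions to the different on the two fibers. Theorem \ref{theoremcaljumpirred}, combined with Hasse-Arf and the conductor-discriminant formula applied to the wildly ramified local cyclic $p^n$-extension, shows that on the special fiber the local different at $x_i$ is controlled by $\cond(\underline{f},x_i)$. On the generic side, $\characteristic(\mathscr{K})=0$ forces the ramification at each $X^j_i$ to be tame, so its local different contribution is exactly $\cond(\chi_\eta, X^j_i)$ in the convention of Definition \ref{defnconductor}. Upper-semicontinuity of the arithmetic genus of the normalization in a flat family of curves then bounds the special-fiber contribution by the sum of the generic-fiber contributions, producing (\ref{eqndiffvanishingcycle}); equality holds exactly when the family of normalizations is smooth over the formal disk, which is the very definition of good reduction at $x_i$.

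For $n=1$ the statement is the classical vanishing-cycles inequality for $p$-cyclic covers, which I would quote directly (e.g.\ from the Raynaud--Henrio--Matignon tradition already invoked earlier in the paper). For general $n$ I would proceed by induction, filtering $\chi$ through its $\mathbb{Z}/p^{n-1}$ subcover $\chi^p$ and the residual $\mathbb{Z}/p$-quotient, and combining the two inequalities using multiplicativity of the different in a tower. The main obstacle I expect is the bookkeeping in the inductive step: one has to match the way the branch points $\{X^j_i\}_{j\in J}$ distribute across sub-covers on the generic side with the tower of ramification breaks of $\underline{f}$ on the special side. The hypothesis $\sw(\chi)=0$ is what makes this tractable, since it guarantees that the entire tower of reductions is already defined over $\kappa$, so the decomposition into sub-covers commutes with specialization and the inductive estimates glue cleanly into the desired inequality.
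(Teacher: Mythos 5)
There is a genuine gap in the quantitative heart of your argument. You assert that on the generic fiber the local different contribution of a branch point $X_i^j$ ``is exactly $\cond(\chi_\eta,X_i^j)$ in the convention of Definition \ref{defnconductor}.'' But that convention assigns the value $1$ to \emph{every} characteristic-zero branch point, whereas the contribution of $X_i^j$ to the different of a $\mathbb{Z}/p^n$-cover is $p^n-p^{n-m}$ when its branching index is $p^m$; likewise, on the special fiber the different at $x_i$ is computed by Hilbert's formula from the whole lower-numbering filtration, not from the highest break alone, so it is not a fixed multiple of $\cond(\underline{f},x_i)$. These two quantities are proportional only in the order-$p$ case (where every branch point has index $p$ and the factor $p-1$ cancels). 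For $n\ge 2$ — the case actually needed in this paper, where $\Phi$ has branch points of index $2$ and of index $4$, each counting as conductor $1$ — the genus/different semicontinuity argument yields an inequality between differents, which is genuinely different from, and does not imply, inequality (\ref{eqndiffvanishingcycle}). Your proposed rescue, an induction on $n$ through the $\mathbb{Z}/p^{n-1}$-subcover, is exactly where this mismatch resurfaces: neither $\cond(\underline{f},x_i)$ (highest break plus one) nor the branch-point count is additive across the tower, and the ``equality implies good reduction'' direction would further require showing that equality in the combined estimate forces equality at each layer and that layerwise good reduction glues; you flag this bookkeeping as an obstacle but do not resolve it, so the proof is incomplete precisely at its hardest point.

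For comparison, the paper avoids the different entirely. It localizes at each $x_i$ to obtain an admissible character, identifies $\cond(\underline{f},x_i)-1$ with the \emph{boundary Swan conductor} of that local character via \cite[Remark 5.8(i)]{MR3194815}, identifies the right-hand side of (\ref{eqndiffvanishingcycle}) with the cardinality $\lvert\mathbb{B}(\chi^i)\rvert$, and then quotes \cite[Corollary 5.13]{MR3194815} (resp.\ \cite[Corollary 3.24]{2020arXiv201013614D}), both consequences of Kato's vanishing-cycles formula \cite[Theorem 6.7]{MR904945}. That formula is the correct replacement for the naive genus comparison: it is stated directly for order $p^n$ in terms of the refined Swan conductors, so no induction on $n$ and no different computation is needed. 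Your localization step and your appeal to the $n=1$ vanishing-cycles tradition are in the right spirit, but to make the argument work for general $n$ you would essentially have to reprove Kato's theorem, or cite it as the paper does.
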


\begin{proof}
Notice that for each $i$ the completion of the localization $\chi^i$ of $\chi$ at $x_i$ gives an admissible character which was studied in \cite{MR3194815} and \cite{2020arXiv201013614D}. When $\sw(\chi)=0$, we have $$\sw_{\mathfrak{K}_{p^n}(\underline{f})}(x_i) = \sw_{\chi^i}(x_i)=\cond(\underline{f}, x_i)-1. $$ The first equality follows from \cite[Remark 5.8 (i)]{MR3194815}, where $\sw_{\chi^i}(x_i)$ is the \emph{boundary Swan conductor} of $\chi^i$  at $x_i$, and $\sw_{\mathfrak{K}_{p^n}(\underline{f})}(x_i)$ is the usual Swan conductor of $\mathfrak{K}_{p^n}(\underline{f})$ with respect to $\ord_{x_i}$, which is in turn equal to the highest ramification break of $\mathfrak{K}_{p^n}(\underline{f})$ at $x_i$. Furthermore, the right-hand-side of (\ref{eqndiffvanishingcycle}) is precisely $\lvert \mathbb{B}(\chi^i) \rvert$ in \cite[Corollary 5.13]{MR3194815} (characteristic zero) or  $\mathfrak{C}_{\chi^i}(0, 0, x_i)$ in \cite[Corollary 3.24]{2020arXiv201013614D} (characteristic $p$). The proposition follows at once. All the cited results are consequences of \cite[Theorem 6.7]{MR904945}.
\end{proof}

\section{The universal \texorpdfstring{$\mathbb{Z}/2$}{}-deformation}
\label{secgeneralforms}
The below result shows that a $\mathbb{Z}/2$-lift can be explicitly described. 
\begin{proposition}
\label{proprepresentationoforder2}
Suppose we are given an Artin-Schrier extension of $\kappa=k(x)$ defined by
\[ y^2-y=\frac{1}{x^{m_1}}:=f_1, \]
where $m_1=2q_1+1$. Suppose $\Phi_1$ is a lift of $\phi_1$ to a complete discrete valuation ring $R$ in characteristic zero with valuation $\nu$ normalized so that $\nu(2)=1$, uniformizer $\pi$. Then $\Phi_1$ can be represented by a Kummer equation of the form
\begin{equation}
    \label{eqnZ/2liftreducedform}
    W_1^2=1+4 \frac{X \cdot G}{\prod_{i=1}^{q_1+1}(X-v_i)^2},
\end{equation}
where the $v_i$'s lie in a finite extension $R_{\Phi_1}/R$, and $\nu(v_i)>0$, $G \in R[X]$, and $G \equiv 1$ modulo $\pi$. In particular, we have
\begin{equation}
\label{eqnZ/2liftformnumerator}
    \prod_{i=1}^{q_1+1}(X-v_i)^2+4 X \cdot G=\prod_{j=1}^{m_1+1}(X-B_i),
\end{equation} where $B_1, \ldots, B_{m_1+1} \in R$ are distinct branch points of $\Phi_1$.
\end{proposition}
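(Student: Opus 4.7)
The plan is to build the desired normalized Kummer equation in three stages: first arrange $\Phi_1$ in polynomial form and locate its branch points using good reduction; then apply Theorem~\ref{theoremswanorderpmixedchar} to produce a correcting character; finally adjust this correcting character so that both numerator and denominator take the asserted form. After a finite extension of $R$, which will eventually be absorbed into $R_{\Phi_1}$, we write $\Phi_1$ as $W_1^2 = F(X)$ with $F = \prod_{j=1}^{N}(X - B_j)$ monic and square-free. The hypothesis of good reduction, combined with the fact that $\phi_1 : y^2 - y = 1/x^{m_1}$ has a unique branch point at $x = 0$ of conductor $m_1 + 1$ and is \'etale elsewhere, together with Proposition~\ref{propvanishingcycle}, forces $N = m_1 + 1 = 2(q_1+1)$ and $\nu(B_j) > 0$ for each $j$.

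Since $\sw(\Phi_1) = 0$ (because $\Phi_1$ has good reduction), Theorem~\ref{theoremswanorderpmixedchar} furnishes a correcting character $H$ with $\nu(F - H^2) \geq 2$ whose residue $\overline{(F - H^2)/4}$ lies in the ASW class of $1/x^{m_1}$. The relation $1/x^{m_1} \sim_{\ASW} x/(x^{q_1+1})^2$ presents this class with a squared denominator, so we may take $H$ to lift $x^{q_1+1}$; after enlarging the extension so that $H$ splits, write $H = \prod_{i=1}^{q_1+1}(X - v_i)$ with $\nu(v_i) > 0$. The Kummer equation then becomes $W_1^2 = 1 + 4 G''/H^2$ with $G'' := (F - H^2)/4 \in R_{\Phi_1}[X]$.

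It remains to arrange $G'' = X \cdot G$ with $G \equiv 1 \pmod{\pi}$. The substitution $H \mapsto H + 2H_1$ changes $G''$ by $-HH_1 - H_1^2$; reducing modulo $\pi$, the residue $\bar{G}''/\bar{H}^2$ is shifted by exactly $-\bar{H}_1/\bar{H} - (\bar{H}_1/\bar{H})^2 = \wp(\bar{H}_1/\bar{H})$, using the characteristic-$2$ identity $-y - y^2 = y^2 - y$. Hence every ASW-representative of $1/x^{m_1}$ with denominator $\bar{H}^2$ can be realized as $\overline{G''}/\overline{H}^2$ by an appropriate choice of $H$; picking the representative $x/(x^{q_1+1})^2$ on the nose yields $\overline{G''} = x \in k[X]$, so that $G'' - X \in \pi R_{\Phi_1}[X]$. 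A final normalization of the elementary symmetric function $\prod_i v_i$ (choosing it as the correct square root of $\prod_j B_j$) ensures $G''(0) = 0$ exactly, hence $G'' = XG$ with $G \in R_{\Phi_1}[X]$ and $\bar{G} = 1 \in k[X]$, i.e., $G \equiv 1 \pmod{\pi}$. The polynomial identity~\eqref{eqnZ/2liftformnumerator} is then just the equation $F = H^2 + 4XG$, obtained by clearing denominators; equality follows because both sides are monic of degree $m_1+1$ defining the same Kummer class.

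The principal technical obstacle is the normalization carried out in the third paragraph: one must verify that the successive adjustments of $H$ terminate inside a finite extension $R_{\Phi_1}/R$ and preserve the factorization $H = \prod(X - v_i)$ with $\nu(v_i) > 0$. The characteristic-$2$ identification of Kummer-type shifts $H \mapsto H + 2H_1$ with ASW-corrections of the residue is the essential ingredient that makes this possible; as noted in the remark following Theorem~\ref{theoremlifttowerz4}, the analogous argument in characteristic $p > 2$ introduces genuinely rational corrections and fails to yield such a clean polynomial description.
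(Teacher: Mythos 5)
Your overall strategy coincides with the paper's: use good reduction together with Proposition \ref{propvanishingcycle} to pin down $m_1+1$ branch points specializing to $0$, write $\Phi_1$ by a Kummer polynomial, produce a correcting character $H$ whose square is the denominator, and split $H$ to obtain the $v_i$. The first and last stages are fine. The gap sits in your second and third paragraphs, exactly where the paper instead invokes \cite[Lemma 5.18]{MR3194815}. Theorem \ref{theoremswanorderpmixedchar} does not ``furnish'' a correcting character: it only gives a criterion ($[F-H^2]\notin\kappa^2$) for a \emph{given} $H$ to achieve the maximum, and it says nothing about $H$ being a monic polynomial of degree $q_1+1$ reducing to $x^{q_1+1}$. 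The existence of such an $H$ with the extra property that $F-H^2$ has no terms of even order in $X^{-1}$ --- equivalently, that after clearing denominators the numerator is divisible by $X$, has degree at most $m_1$, and reduces to $x$ --- is precisely the content of \cite[Lemma 5.18]{MR3194815}, which the paper uses and which your argument must replace but does not.

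Your substitute --- iterated shifts $H\mapsto H+2H_1$ realizing ASW coboundaries $\wp(\bar H_1/\bar H)$, followed by a ``final normalization of $\prod_i v_i$'' --- is the right idea, but the decisive step is asserted rather than proved, as you yourself concede by calling it ``the principal technical obstacle.'' Concretely: to force $G''(0)=0$ you must replace $H(0)$ by a square root of $F(0)=\prod_j B_j$, and to preserve $G''\equiv X\pmod{\pi}$ the correction $c=\pm\sqrt{F(0)}-H(0)$ must satisfy $\nu\bigl(Hc/2+c^2/4\bigr)\ge\nu(\pi)$, i.e.\ essentially $\nu(c)\ge 1+\nu(\pi)$. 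But from $c_+c_-=-4G''(0)$ one only gets $\nu(c_+)+\nu(c_-)\ge 2+\nu(\pi)$, hence $\nu(c)\ge 1+\nu(\pi)/2$ for one choice of sign, which is not sufficient without a further argument; the resolution in \cite[Lemma 5.18]{MR3194815} is to build this square-root condition into the inductive construction of the coefficients $d_j$ from the outset, together with the estimate $\nu(d_j)\ge jr_0$ that the paper then feeds into the Newton polygon to get $\nu(v_i)>0$. Likewise, the termination of your coboundary adjustments inside a fixed finite extension, and the fact that they keep $H$ monic of degree exactly $q_1+1$ with all roots of positive valuation, are not verified. As written, the proof is incomplete precisely at the step that makes the proposition nontrivial.
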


\begin{proof}
Set $\mathbb{K}:=\Frac (R[X])$ and equip it with the Gauss valuation $\nu_{\mathbb{K}}$ so that $\nu_{\mathbb{K}}(X)=0$ and $\nu_{\mathbb{K}}(2)=1$. Consider $\Phi_1$ as a character in ${\rm H}^1_2(\mathbb{K})$. Let $\mathbb{B}(\Phi_1)$ be $\Phi_1$'s generic branch locus. As $\Phi_1$ has good reduction, one obtains the following equation when apply Proposition \ref{propvanishingcycle}
\begin{equation*}
\begin{split}
    \cond (f_1, 0) &=\sum_{B \in \mathbb{B}(\Phi_1)} \cond(\Phi_{1,\eta},B) \\
    m_1+1 &= \# \mathbb{B}(\Phi_1).
\end{split}
\end{equation*}
We thus may assume that $\Phi_1$'s branch locus has $m_1+1$ distinct points $B_1, B_2 \ldots, B_{m_1+1}$, all specializing to $0$. Kummer theory for order $2$ then indicates that $\Phi_1$ is defined by
\[ W_1^2= \prod_{j=1}^{m_1+1}\bigg(1-\frac{B_i}{X}\bigg) =: 1 + \sum_{i=1}^{m_1+1} \frac{b_i}{X^i}=:F \in R[X^{-1}]. \]
Note that the order of $F$'s denominator at $X=0$ is $m_1+1$, which is even, hence does not contribute to $\mathbb{B}(\Phi_1)$. One also sees that $\nu_{\mathbb{K}}(F)=0$. Applying \cite[Lemma 5.18]{MR3194815} with $N=q_1+1$, one finds $H=1+\sum_{j=1}^{q_1+1}\frac{d_j}{X^j} \in R[X^{-1}]$ such that $$F-\text{\rm H}^2=\sum_{l=1}^{m_1} \frac{a_l}{X^l} \in R[X^{-1}] $$ verifying $a_{2l}=0$ for all $l \le q_1$. Thus, we have $\nu_{\mathbb{K}}(F-\text{\rm H}^2) \ge 0$. In addition, the image of $(F-\text{\rm H}^2)2^{-\nu_{\mathbb{K}}(F-\text{\rm H}^2)}$ in $\kappa$ (via the canonical reduction map), which we denote by $[F-\text{\rm H}^2]$, is not a square in $\kappa$ since all the nonzero coefficients are of odd degree. Therefore, as $\Phi_1$ has a good reduction isomorphic to $\phi_1$ by the hypothesis, Theorem \ref{theoremswanorderpmixedchar} asserts that $\nu_{\mathbb{K}}(F-\text{\rm H}^2)=2$ and $[F-H]=1/x^{m_1}$. We hence obtain a best representation for $\Phi_1$
\[ \frac{F}{\text{\rm H}^2}=1+\frac{F-H^2}{H^2} = 1+4\frac{X \cdot G}{(X^{q_1+1}+X^{q_1}d_1+ \ldots +d_{q_1+1})^2}=:1+4 \frac{X \cdot G}{(H')^2} , \] where $G \in R[X]$, and $G \equiv 1 \pmod{\pi}$. Let $R_{\Phi_1}$ be the minimal extension of $R$ such that $H'$ splits completely into $\prod_{i=1}^{q_1+1} (X-v_i)$. Set $\tilde{\mathbb{K}}=\Frac R_{\Phi_1}[X]$. Finally, as $\nu_{\mathbb{K}}(d_j)\ge jr_0$ for some $r_0>0$ as shown in the proof of \cite[Lemma 5.18]{MR3194815}, the Newton polygon technique (see \cite[Proposition 6.3]{MR1697859}) affirms that $\nu_{\tilde{\mathbb{K}}}(v_i)>0$ for $i=1, \ldots, q_1+1$, hence the final assertion.
\end{proof}

\subsection{The versal deformation ring of an Artin-Schreier \texorpdfstring{$\mathbb{Z}/2$}{Z2}-cover}

Let $\hat{\mathcal{C}}$ denote the category of local Noetherian complete $K:=\Frac(W(k))$-algebras $A$ with residue $k \cong A/\mathfrak{m}_A$. Let $\mathcal{C}$ be $\hat{\mathcal{C}}$'s subcategory consisting of $K$-modules of finite type. Let $\Def^{m_1}_{\mathbb{Z}/2}: \mathcal{C} \rightarrow \text{SETS}$ be the deformation functor defined by
    \begin{equation*}
        \Def^{m_1}_{\mathbb{Z}/2}(A)=\big\{ \text{(smooth) deformations of $\phi_1: k[[x]] \rightarrow k[[z_1]]$ to $A$} \big\}/\cong.
    \end{equation*}
\cite[Th{\'e}or{\`e}m 2.2]{MR1767273} shows that the functor admits a versal deformation. In the following, we give an alternative proof for \cite[Th{\'e}or{\`e}m 4.3.7]{MR1767273}'s characteristic $2$ case using the tools in \S \ref{secdegenerationcyclic}.

\begin{proposition}
\label{propASchar2versaldeformation}
    Suppose we are given a $\mathbb{Z}/2$-extension $k[[z_1]]/k[[x]]$ with conductor $m_1+1=2q_1+2$.
    Suppose $\Tilde{\Phi}_1$ is a deformation of $\phi_1$ over $R_{m_1}:=K[[u_1, \ldots, u_{q_1+1}]]$ given by
    \begin{equation}
    \label{eqnuniversalZ/2deformation}
        W_1^2=1+4\frac{X}{(X^{q_1+1}+u_1 X^{q_1}+ \ldots + u_{q_1+1})^2}.
    \end{equation}
    Then the universal pair $(R_{m_1}, \Tilde{\Phi}_1)$ represents $\Def^{m_1}_{\mathbb{Z}/2}$. 
\end{proposition}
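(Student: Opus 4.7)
My approach is to identify $R_{m_1}$ with the versal deformation ring provided by \cite[Théorème 2.2]{MR1767273}, using Proposition \ref{proprepresentationoforder2} to pin down the explicit family.

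By \cite[Théorèmes 2.2 and 4.3.7]{MR1767273}, the functor $\Def^{m_1}_{\mathbb{Z}/2}$ is pro-represented by a formally smooth local $W(k)$-algebra $R^{\mathrm{vers}}$ of relative dimension $q_1+1$; hence $R^{\mathrm{vers}}$ is a power series ring in $q_1+1$ variables, matching the number of coordinates $u_1, \ldots, u_{q_1+1}$ of $R_{m_1}$. The explicit family $\tilde{\Phi}_1$ over $R_{m_1}$ induces a classifying morphism $\psi \colon R^{\mathrm{vers}} \to R_{m_1}$, and since both rings are formally smooth over $W(k)$ of the same relative dimension, it suffices to show that $\psi$ is surjective, or dually that the Kodaira--Spencer map $\Hom_{\mathcal{C}}(R_{m_1}, A) \to \Def^{m_1}_{\mathbb{Z}/2}(A)$ is bijective for every test ring $A$.

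Given any deformation $\Phi \in \Def^{m_1}_{\mathbb{Z}/2}(A)$, I would apply Proposition \ref{proprepresentationoforder2} (whose proof adapts verbatim from the CDVR setting to artin $W(k)$-algebras, since the key correction-term input \cite[Lemma 5.18]{MR3194815} is purely formal) to write $\Phi$ in the normal form
\[
  W^2 = 1 + 4\frac{XG}{\prod_{i=1}^{q_1+1}(X-v_i)^2}, \qquad G \equiv 1 \pmod{\mathfrak{m}_A}, \quad \nu(v_i)>0,
\]
with the $v_i$ lying in a finite extension of $A$. Multiplying the Kummer form by the square $\prod_i(X-v_i)^2$ presents the cover as $W'^2 = \prod_i(X-v_i)^2 + 4XG =: P(X)$, where $P(X) = \prod_j(X-B_j)$ is the branch polynomial. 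The key claim is that the good-reduction hypothesis forces $P(X) - 4X$ to be a polynomial square $(Q'(X))^2$ for a monic $Q'(X) = X^{q_1+1} + u_1 X^{q_1} + \cdots + u_{q_1+1}$ with $u_i \in \mathfrak{m}_A$; granting this, the map $\rho \colon R_{m_1} \to A$ sending each $u_i$ to its namesake satisfies $\rho^*\tilde{\Phi}_1 \cong \Phi$ up to Kummer equivalence, giving the desired surjectivity.

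The main technical obstacle is establishing the factorization $P(X)-4X = (Q'(X))^2$ over an arbitrary artinian $A$. I expect this to follow from Hensel's lemma in the $\mathfrak{m}_A$-adic topology: modulo $\mathfrak{m}_A$ the polynomial $P(X) - 4X$ reduces to $X^{2(q_1+1)} = (X^{q_1+1})^2$ (since every branch point specializes to $0$ and $4 \in \mathfrak{m}_A$), which admits the unique monic square root $X^{q_1+1}$; the invertibility of $2$ in $W(k)[1/p]$ together with the iterative identity $(Q'+2h)^2 - (Q')^2 = 4Q'h + 4h^2$ then lets one successively refine this root to any precision $\mathfrak{m}_A^n$, producing the required polynomial square root in $A[X]$. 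Injectivity of $\rho$ is then immediate, because the branch locus of $\Phi$ determines $Q'(X)$ intrinsically and hence pins down the $u_i$ uniquely.
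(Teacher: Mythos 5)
There is a genuine gap at the heart of your surjectivity argument. Your ``key claim'' --- that good reduction forces $P(X)-4X=\prod_{i}(X-v_i)^2+4X(G-1)$ to be a polynomial square --- is false. Already for $m_1=1$ (so $q_1=0$), take $G=1+\pi$, a constant congruent to $1$ modulo $\pi$: then $W^2=1+4XG/(X-v_1)^2$ is a legitimate deformation for any $v_1\in\mathfrak{m}_A$ by Theorem \ref{theoremswanorderpmixedchar}, yet $(X-v_1)^2+4\pi X$ is a perfect square only when $v_1=\pi$. Since the branch divisor is an invariant of the character, your claim is exactly equivalent to saying that $\Phi$ equals a specialization of $\tilde{\Phi}_1$ \emph{without any change of coordinate on} $X$, and that is simply not true. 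What the paper does instead is apply the automorphism $X\mapsto X\sqrt[m_1]{G}$ of $A[[X]]$ (legitimate because deformations are classified up to isomorphism, and this substitution reduces to the identity on the special fiber), after which the Kummer datum becomes exactly $1+4X/(X^{q_1+1}+w_1X^{q_1}+\cdots+w_{q_1+1})^2$. Your proposed Hensel/Newton repair cannot rescue the claim either: refining via $(Q'+2h)^2-(Q')^2=4Q'h+4h^2$ requires dividing by $2Q'$, and $2$ lies in $\mathfrak{m}_A$ because the residue field $k$ has characteristic $2$; the rings in $\mathcal{C}$ are not $W(k)[1/2]$-algebras, so the approximate root $X^{q_1+1}$ cannot be iteratively improved.

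Three further points. First, citing \cite[Th{\'e}or{\`e}m 4.3.7]{MR1767273} is circular: the proposition is presented precisely as an alternative proof of that theorem's characteristic-$2$ case, and your argument never actually uses the dimension count it provides. Second, the category $\mathcal{C}$ contains equal-characteristic test rings, for which the covers are Artin--Schreier rather than Kummer; there the normal form of Proposition \ref{proprepresentationoforder2}, the factor $4XG$, and every division by $2$ are unavailable, and the paper devotes a separate argument (partial-fraction reduction, Theorem \ref{theoremcaljumpirred}, and Proposition \ref{propvanishingcycle}) to this case. Third, representability also requires checking the forward direction --- that \emph{every} specialization $u_i\mapsto v_i$ with $\nu(v_i)>0$ yields a deformation of $\phi_1$ with good reduction --- which the paper verifies via the vanishing-cycles criterion in both mixed and equal characteristic; your proposal omits this entirely.
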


\begin{proof}[Sketch of proof]
The claim that $\Tilde{\Phi}_1$ smoothly deforms $\phi_1$ is proved in \cite[Lemma 4.3.1]{MR1767273}. We paraphrase it here for the convenience of the reader. One first observes that substituting $W_1=1+2Y_1$ to (\ref{eqnuniversalZ/2deformation}) yields the following Artin-Schreier like equation
\begin{equation}
\label{eqnuniversalZ/2AStype}
    (Y_1)^2-Y_1=\frac{X}{(X^{q_1+1}+u_1 X^{q_1}+ \ldots + u_{q_1+1})^2}.
\end{equation}
We hence recover the equation represented $\phi_1$ when $u_i=0$. Thus, $\tilde{\Phi}_{1}$'s special fiber is birational to $\phi_1$. Let $B$ be the normalization of $R_{m_1}[[X]]$ in the extension of its fraction field by adjoining $W_1$. Substitute $Z_1:=(1-W_1)(X^{q_1+1}+u_1 X^{q_1}+ \ldots + u_{q_1+1})/2$ to (\ref{eqnuniversalZ/2deformation}), one obtains
\begin{equation}
\label{eqnZ/2universalnormal}
    Z_1^2-Z_1(X^{q_1+1}+u_1 X^{q_1}+ \ldots + u_{q_1+1})-X =0.
\end{equation}
Therefore, $Z_1 \in B$ and (\ref{eqnZ/2universalnormal}) is of the form $F(Z_1,X)=0$ with $\frac{\partial F}{\partial X} =1 \mod{(Z_1, X)}$. Hensel's lemma then tells us that $X$ lies in $R_{m_1}[[Z_1]]$, hence $B=R_{m_1}[[Z_1]]$, hence $\tilde{\Phi}_1$ has good reduction isomorphic to $\phi_1$. 

``$\Rightarrow$'' Suppose $A$ is a complete valued ring with valuation $\nu$ in $\mathcal{C}$. Then an algebra homomorphism $\psi: R_{\phi_1} \rightarrow A$ is determined by the identification $\psi(u_i)= v_i$, where $v_1, \ldots, v_{q_1+1} \in A$ (may not be distinct) with $\nu(v_i)>0$. The specialization of $\Tilde{\Phi}_1$ under $\psi$ gives rise to a $\mathbb{Z}/2$-extension $\Phi_{1,\psi}:=\Tilde{\Phi}_1 \otimes_{R_{\phi_1}} A$ of $A[[X]]$. The following will discuss why $\Phi_{1,\psi}$ is always a deformation of $\phi_1$ over $A$ using our criterion for good reduction in \S \ref{seccriteriongoodreduction}.

When $A$ is of mixed characteristic $(0,p)$, then the fact that $\Phi_{1, \psi}$ birationally deforms $\phi_1$ is also instantaneous from Theorem \ref{theoremswanorderpmixedchar}. In addition, due to Kummer theory, $\Phi_{1,\psi}$'s generic branch locus are the zeros of the degree $2q_1+2=m_1+1$ polynomial $(X^{q_1+1}+v_1 X^{q_1}+ \ldots + v_{q_1+1})^2+4X$, hence has size at most $m_1+1$. The remaining is immediate from Proposition \ref{propvanishingcycle}.

When $A$ is of equal-characteristic $2$, it follows from (\ref{eqnuniversalZ/2AStype}) that $\Phi_{1, \psi}$ is generically Artin-Schreier of the form 
\begin{equation*}
    (Y_1)^2-Y_1= \frac{X}{(X^{q_1+1}+v_1 X^{q_1}+ \ldots + v_{q_1+1})^2}=: \frac{X}{\prod_{j=1}^l(X-w_j)^{2n_j} },
\end{equation*}
where the $w_j$'s are distinct and $\sum_{j=1}^l n_j=q_1+1$. Furthermore, the conductor (Definition \ref{defnconductor}) of $\Phi_{1,\psi}$'s generic fiber, which we denote by $\Phi_{1, \psi, \eta}$, at each place $w_j$ is at most $2n_j$ due to Theorem \ref{theoremcaljumpirred}. Applying Proposition \ref{propvanishingcycle} to $\Phi_{1, \psi}$ gives us the following inequality
\begin{equation*}
    \cond(\phi_1, 0)=2q_1+2 \le \sum_{i=1}^l \cond(\Phi_{1, \psi, \eta}, w_i) \le \sum_{i=1}^l 2 n_i=2q_1+2.
\end{equation*}
Therefore, the same proposition asserts that $\Phi_{1, \psi}$ is a deformation of $\phi_1$. 

``$\Leftarrow$'' Conversely, suppose we are given $\Phi_1 \in \Def^{m_1}_{\mathbb{Z}/2}(A)$. Let us first consider the case $A$ is of mixed characteristic $(0,p)$. Proposition \ref{proprepresentationoforder2} then permits us to assume that $\Phi_1$ is of the form (\ref{eqnZ/2liftreducedform}). As $G$ is congruent to $1$ modulo $\pi$, Hensel's lemma indicates the existence of  $G$'s $m_1$-th root in $R[[X]]$, which we denote by $\sqrt[m_1]{G}$. The change of variables $X \mapsto X \sqrt[m_1]{G}$ transforms (\ref{eqnZ/2liftreducedform}) to
\begin{equation*}
    W_1^2= 1+ 4 \frac{X}{\prod_{i=1}^{q_1+1}(X-v_i/\sqrt[m_1]{G})^2}=: 1+ 4 \frac{X}{(X^{q_1+1}+w_1 X^{q_1}+\ldots +w_{q_1+1})^2}.
\end{equation*}
Thus, the specialization $\Tilde{\Phi}_{1, \psi}$ with $\psi(u_i)= w_i$ is isomorphic to $\Phi_1$. 

Let us now consider the case $A$ is equicharacteristic $2$, and $\Phi_1$ is a deformation with $\mathbb{B}(\Phi_1)=\{ w_1, \ldots, w_l \}$, where $w_i \in \mathfrak{m}_A$ for all $i$. Set $2n_i:= \cond(\Phi_{1,\eta},w_i)$ with $\Phi_{1, \eta}$ being $\Phi_1$'s generic fiber. Once more, Proposition \ref{propvanishingcycle} asserts that $\sum_{i=1}^l 2n_i=2(q_1+1)$. Moreover, if $f_{\Phi_1}$ is the right-hand-side of the Artin-Schreier equation that gives $\Phi_1$, \cite[Proposition 3.17]{2020arXiv200203719D} indicates the following (after possibly replacing $f_{\Phi_1}$ by one in the same Artin-Schreier class)
$$ f_{\Phi_1}  \equiv \frac{1}{X^{2q_1+1}}  \equiv \frac{X}{\prod_{i=1}^l (X-w_i)^{2n_i}} \pmod{\mathfrak{m}_A}.$$
The Katz-Gabber cover technique (\S \ref{secsetup}) again allows us to assume that $f_{\Phi_1}$ is a rational function in $\Frac(A[X])$, thus of the following form
\begin{equation}
\label{eqnfirstreducedfphi1}
    f_{\Phi_1}=\frac{X}{\prod_{i=1}^l (X-w_i)^{2n_i}} + \bigg( \sum_{i=1}^l \sum_{j = 1}^{l_i} \frac{d_{i,j}}{(X-w_i)^j} \bigg),
\end{equation}
where $d_{i,j} \in \mathfrak{m}_A$. In addition, over a finite extension of $A$, one may simplify the terms $d_{i,2q}/(X-w_i)^{2q}$ inductively starting from the highest degree of $(X-w_i)^{-1}$ by adding 
\begin{equation*}
    \wp\bigg( \frac{\sqrt{d_{i,2q}}}{(X-w_i)^{q}}\bigg)=\frac{d_{i,2q}}{(X-w_i)^{2q}}+\frac{\sqrt{d_{i,2q}}}{(X-w_i)^{q}}
\end{equation*}
to (\ref{eqnfirstreducedfphi1}). Furthermore, the assumption on generic conductors and Theorem \ref{theoremcaljumpirred} indicate that there exists a rational function $b$ in (a finite extension of) $\Frac (A[X])$ with no poles outside $\mathbb{B}(\Phi_1)$ such that $\ord_{(X-w_i)}( f_{\Phi_1} +\wp(b))=-2n_i+1$ for $i=1, \ldots, l$. Therefore, one is able to do the above simplification for $q>n_i$ over the same fraction field and obtains $f_{\Phi_1}$ in the below form
\begin{equation}
\label{eqnequicharZ/2form}
    \begin{split}
        f_{\Phi_1}&=\frac{X}{\prod_{i=1}^l (X-w_i)^{2n_i}} + \bigg( \sum_{i=1}^l \sum_{j=1}^{2n_i} \frac{c_{i,j}}{(X-w_i)^j} \bigg) ,\\
        &= \frac{\sum_{j=0}^{2q_1+1} a_j X^j}{\prod_{i=1}^l (X-w_i)^{2n_i}} =: \frac{N(X)}{\prod_{i=1}^l (X-w_i)^{2n_i}},
    \end{split}
\end{equation}
where $c_{i,j} \in \mathfrak{m}_A$, $a_j \in \mathfrak{m}_A$ for $j \neq 1$, and $a_1 \equiv 1 \pmod{\mathfrak{m}_A}$. Note that $N(X) \equiv X \pmod{\mathfrak{m}_A}$ and $N'(X) \equiv 1 \pmod{\mathfrak{m}_A}$. There hence exist $a \in \mathfrak{m}_A$ such that $N(a)=0$ due to Hensel's Lemma. One thus may further assume $a_0=0$ by replacing $X$ with $X-a$. The numerator of (\ref{eqnequicharZ/2form}) simplifies to $X (\sum_{j=1}^{2q_1+1}a_jX^{j-1})=:X \cdot I$. Finally, as $I$ is a unit in $R[X]$, the change of variables $X \mapsto X \cdot \sqrt[2q_1+1]{I}$ likes one in the case of mixed characteristic gives $f_{\Phi_1}$ the form 
\begin{equation*}
    f_{\Phi_1}= \frac{X}{\prod_{i=1}^l(X-(w_i+a)/\sqrt[2q_1+1]{I})^{2n_i}}=: \frac{X}{(X^{q_1+1}+v_1X^{q_1}+\ldots+v_{q_1+1})^2},
\end{equation*}
where the $v_i$ indeed have positive valuations. The identification $u_i \mapsto v_i$ then yields the specialization we seek.
\end{proof}

\begin{remark}
\label{remarkOSScomponent}
    The spectrum of $R_{m_1}$'s analog in odd characteristic $p$, where $m_1=pq-l$ $(1 \le l <p)$, also forms a full-dimensional component, a.k.a. the Oort-Sekiguchi-Suwa (OSS) component, of an Artin-Schreier extension's versal deformation ring as shown in \cite[Th{\'e}or{\`e}m 5.3.3]{MR1767273}. However, unlike characteristic $2$, there are deformations outside this component like most of those in \cite{DANG2020398} and \cite{2020arXiv200203719D}. More precisely, an equal-characteristic deformation lies in the OSS component only if the conductors (Definition \ref{defnconductor}) at all but may be one generic branch points are multiples of $p$ \cite[\S 3.1.1]{DANG2020398}. A mixed characteristic lift does so only if its ``Hurwitz data'' \cite[\S 2]{MR1630112}  is a length-$(m_1+1)$-multi-set of the form $\{1, \ldots, 1, l\}$.
\end{remark}

\section{The lift}
\label{secthelift}

\subsection{A minimal lift}
Recall that $\phi$ is given by a length-two-Witt-vector in (\ref{eqnphi}). That is to say, the corresponding extension of the function field $\kappa=k(x)$ is given by adjoining $y_1, y_2$ such that
\begin{equation}
\label{eqnexplicitASphi}
    \begin{cases}
        y_1^2-y_1 & =\frac{1}{x^{m_1}}, \text{ and} \\
        y_2^2-y_2 & = \frac{y_1}{x^{m_1}}+ \frac{\sum_{0 \le i < n_2} a_{n_2-i} x^i}{x^{n_2}}=:\frac{y_1}{x^{m_1}}+ f_2(x),
    \end{cases}
\end{equation}
where $n_2=2q_2+1$ is odd, $i$ is even, $a_{n_2-i} \in k$, and either $a_{n_2} \neq 0$ or all $a_{n_2-i}=0$, hence $f_2(x)=0$. The term $y_1/x^{m_1}$ comes from $S_1(X_1,Y_1, X_2, Y_2)-X_2-Y_2$ in \cite[\S 3]{MR2577662}, where $X_1=1/x^{m_1}$ and $Y_1=y_1$. Recall also that $\Phi_1$ is a lift a $\phi_1$ to a finite extension $R$ of $W(k)$ with uniformizer $\pi$. The equation represents $\Phi_1$ then can be expressed as in (\ref{eqnZ/2liftreducedform}) due to Proposition \ref{proprepresentationoforder2}. To simplify the calculation, we assume that one of $\Phi_1$'s generic branch points is $0$. That implies the polynomial $(\prod_{i=1}^{q_1+1}(X-v_i)^2+4 X \cdot G)$'s constant coefficient is zero, which in turn forces at least one of the $v_i$'s to be zero. We hence may rewrite (\ref{eqnZ/2liftreducedform}) as
\begin{equation}
\label{eqnreducedZ/2liftonebranch0}
    W_1^2=1+4 \frac{G}{X \prod_{i=1}^{q_1}(X-v_i)^2}.
\end{equation}
In addition, as in the proof of Proposition \ref{propASchar2versaldeformation}, substituting $W_1=1-2Y_1$ to (\ref{eqnreducedZ/2liftonebranch0}) gives an Artin-Schreier like equation
\begin{equation}
    \label{eqnZ/2liftAStype}
    (Y_1)^2-Y_1=\frac{G}{X\prod_{i=1}^{q_1}(X-v_i)^2}.
\end{equation}
Finally, let $R':=R_{\Phi_1}[\iu]$ and $\mathbb{K}':= \Frac (R' [X])$, where $R_{\Phi_1}$ is given in Proposition \ref{proprepresentationoforder2} and $\iu$ is a fixed primitive $4$-th root of unity. 
\begin{definition}
    \label{defniceZ2}
    We say a $\mathbb{Z}/2$ lift of $\phi_1$ is \emph{nice} if it can be represented by
    \begin{equation}
    \label{eqnniceZ2lift}
        W_1^2=1+4\frac{G}{X^{m_1}},
    \end{equation}
    where $G \in R[X]$.
\end{definition}

\begin{proposition}
   If $\Phi_1$ is nice, then all of its non-zero generic branch points have $2$-valuations of $2/m_1$.
\end{proposition}

\begin{proof}
As before, the non-zero generic branch points of $\Phi_1$ are the roots of $X^{m_1} + 4G$. The good reduction of $\Phi_1$ implies that $G \equiv 1 \pmod{2}$, and the degree of $G$ as a polynomial in $R[X]$ is strictly less than $m_1$. The proposition follows from the Newton polygon technique.
\end{proof}

\begin{remark}
\label{remarkconnectionOW}
    The above proposition suggests that the generic branch locus of $\Phi_1$ lies inside the open disc $D(1/m_1) \subset \mathbb{P}^{1,{\rm an}}_K$ containing points of valuation greater than $1/m_1$. This case is covered by \cite[Theorem 3.4 (i)]{MR3194815}.
\end{remark}

Theorem \ref{theoremlifttowerz4} is equivalent to the following.

\begin{theorem}
\label{theoremZ/4liftintowersexplicit}
    Given a $\mathbb{Z}/4$-extension $\phi$ as in (\ref{eqnexplicitASphi}) and a nice lift $\Phi_1$ of its $\mathbb{Z}/2$-subcover, which may be assumed to have the form (\ref{eqnniceZ2lift}). One may find a non-square rational $G_2 \in \mathbb{K}'$ such that the following equation
    \begin{equation}
        \label{eqnextendingPhi1}
        W_2^2=W_1 \cdot G_2= \bigg(1+4\frac{G}{X^{m_1}}\bigg) G_2
    \end{equation}
    defines a lift $\Phi$ of $\phi$ to $R'$ that extends $\Phi_1$. 
\end{theorem}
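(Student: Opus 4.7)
The overall strategy is to leverage Kummer theory for $\mathbb{Z}/4$ (available because $R'$ contains $\iu$) and reduce the problem to choosing $G_2 \in \mathbb{K}'$ such that the character $\chi := \mathfrak{K}_4(F_1 G_2^2) \in {\rm H}^1_4(\mathbb{K}')$ has good reduction isomorphic to $\phi$, where $F_1 = 1 + 4G/(X \prod_{i=1}^{q_1}(X - v_i)^2)$ defines $\Phi_1$ via (\ref{eqnreducedZ/2liftonebranch0}). Since $\chi^2 = \mathfrak{K}_2(F_1 G_2^2) = \mathfrak{K}_2(F_1)$ automatically equals the character of $\Phi_1$, the $\mathbb{Z}/2$-subcover is already in place and the entire problem is to select $G_2$ so that $\chi$ reduces correctly. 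The first step is to translate the data of $\phi$ in (\ref{eqnphi}) into three conditions on $\chi$: $\sw(\chi) = 0$; the reduced reduction lies in the ASW class of $\underline{f} = (1/x^{m_1},\, y_1/x^{m_1} + f_2(x))$; and the generic branch locus satisfies the equality in Proposition \ref{propvanishingcycle} with total conductor $m_2 + 1$.

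Next, I would construct a candidate $G_2$ guided by the second entry of $\underline{f}$. Because the Kummer form $W_2^2 = W_1 G_2$ degenerates in mixed characteristic via the usual Kummer-to-Artin-Schreier comparison, the natural ansatz is $G_2 = 1 + 2 M$ with $M \in \mathbb{K}'$ a rational function whose poles lie only at $X = 0$ and at the $v_i$. The pole order at $X=0$ should be roughly $q_2 = (n_2 - 1)/2$ so that on reduction the corresponding term produces $f_2(x)$ via ASW, while the poles at the $v_i$ must be calibrated to recover the cross term $y_1/x^{m_1}$ appearing in the second Witt-vector entry of $\underline{f}$. Once $M$ is written down, one computes the reduced form of $\chi$ in the sense of Definition \ref{defnreducedform} and matches it against $\underline{f}$ modulo ASW-equivalent perturbations.

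The core computation is verifying $\sw(\chi) = 0$ together with the correct reduced reduction. I would apply a $\mathbb{Z}/p^n$-version of Theorem \ref{theoremswanorderpmixedchar} in tandem with Lemma \ref{lemmacombination}, decomposing $\chi$ into a product of simpler characters separating the contributions from $X=0$ and from each $v_i$. Whenever two constituents share a Swan conductor but have opposite differential Swan conductors, Lemma \ref{lemmacombination}\ref{lemmacombination3} lets us replace them by a product with strictly smaller Swan conductor; iterating drives $\sw(\chi)$ down to $0$ in finitely many steps. The remaining freedom in $M$ (after fixing its pole structure) is then used to arrange that the resulting reduction lies in the prescribed ASW class, possibly after a Hensel-type change of variable in $X$ as in the end of the proof of Proposition \ref{propASchar2versaldeformation}.

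Good reduction is confirmed via Proposition \ref{propvanishingcycle}: we must ensure that the generic branch points of $\chi$ (the zeros of $F_1 G_2^2 - H^4$ for an appropriate correcting character $H$) have conductors summing to exactly $\cond(\underline{f},0) = m_2 + 1$. I expect the main obstacle to be precisely this matching of branch data. A generic choice of $G_2$ easily produces extraneous branch points or alters the conductors at the existing $v_i$, and preventing this requires tight bookkeeping of the poles of $M$ at $0$ and the $v_i$, together with the flexibility of an extra finite extension of $R_{\Phi_1}[\iu]$ to split off Newton-polygon factors. The explicit Kummer description of $\Phi_1$ afforded by Proposition \ref{proprepresentationoforder2}, which is special to characteristic $2$ per Remark \ref{remarkOSScomponent}, is what makes this counting feasible in the first place.
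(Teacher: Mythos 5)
Your framework is the right one --- Kummer theory to guarantee that $W_2^2=W_1\cdot G_2$ is cyclic of order four and extends $\Phi_1$, degeneration types to control the reduction, and Proposition \ref{propvanishingcycle} to certify good reduction --- but the proposal stops short of the two ideas that actually carry the proof, and the substitute you offer for the core computation would not work as stated. First, you never produce $G_2$: the ansatz $G_2=1+2M$ with poles ``calibrated to recover the cross term $y_1/x^{m_1}$'' is precisely the hard part left undone. The paper's mechanism for that cross term is the explicit minimal lift $G_{2,\min}=1-2\iu\,G/(X\prod_{i=1}^{q_1}(X-v_i)^2)$ of Proposition \ref{propminZ4lift} together with the Green--Matignon substitution $W_2=1+(\iu-1)Y_1-2Y_2$, which converts the Kummer equation into the Artin--Schreier-type relation (\ref{eqnminZ4liftAS}) reducing to $y_2^2-y_2=y_1/x^{m_1}$; the factor $\iu$ (coming from $(1-\iu)^2=-2\iu$) is essential, and without some such device it is not clear how a term involving $y_1$ can appear in the reduction of a Kummer equation written over $\mathbb{K}'$ at all.

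Second, your ``core computation'' leans on tools the paper does not have and that would not suffice. Theorem \ref{theoremswanorderpmixedchar} is stated only for characters of order $p$; the paper deliberately avoids needing an order-$p^n$ version by factoring $\Phi=\Phi_{\min}\cdot\Phi'$ with $\Phi'=\mathfrak{K}_2(G_2\cdot G_{2,\min})$ an \emph{order-two} character, computing $\Phi_{\min}$'s reduction by hand, applying the order-$p$ theorem to $\Phi'$, and combining via Lemma \ref{lemmacombination}\ref{lemmacombination4} (this is Corollary \ref{cormodifying}). Moreover, iterating Lemma \ref{lemmacombination}\ref{lemmacombination3} only tells you that the depth of a product strictly decreases when the differential Swan conductors cancel; it gives no quantitative control and no access to the resulting reduction, so it cannot by itself establish $\sw(\chi)=0$ with the prescribed ASW class. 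Finally, the branch-point matching you correctly flag as the main obstacle is resolved in the paper not by further Newton-polygon extensions but by arranging the numerator of $G_2$ to have the exact degree ($m_1$, resp.\ $2(q_2-q_1)$) forced by Proposition \ref{propvanishingcycle}; in the case $n_2>m_1$ this requires the square-root correction terms of (\ref{eqnZ4liftn2>n_1}), which no generic ansatz of the form $1+2M$ will produce.
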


\begin{remark}
    The fact that $\Phi$ is generically cyclic of degree $4$ and extends $\Phi_1$ is immediate from Kummer theory. Additionally, the nice assumption makes the number of branch point
\end{remark}

 We will consider different cases of $f_2$'s degree separately.

\begin{proposition}
    \label{propminZ4lift}
The following system
\begin{equation}
\label{eqnminbirationalZ4lift}
    \begin{cases}
        W_1^2 & =1+4 \frac{G}{X \prod_{i=1}^{q_1}(X-v_i)^2}=:G_1 \\
        W_2^2 & =W_1  \bigg(1-2\iu \frac{G}{X\prod_{i=1}^{q_1}(X-v_i)^2} \bigg):=W_1 \cdot G_{2,\min} \\
    \end{cases}
\end{equation}
defines an extension $\Phi_{\min}$ of $\Phi_1$ that birationally lifts $\phi_{\min}:=\mathfrak{K}_2\big(\big( \frac{1}{x^{m_1}},0 \big)\big) \in {\rm H}^1_4(k(x))$. In particular, Theorem \ref{theoremZ/4liftintowersexplicit} is resolved when $f_2(x)$ in (\ref{eqnexplicitASphi}) equals $0$.
\end{proposition}
\begin{proof}
We apply the lifting technique of Green and Matignon \cite{MR1645000} as follows. Let $Y_2$ be the solution to  
\begin{equation}
    \label{eqndefineY2}
    W_2=1+(\iu-1)Y_1 -2 Y_2
\end{equation}
in $\mathbb{K}' (W_1,W_2)$. The right-hand-side of the above equation is precisely $F_1(Y_1)+\lambda Y_2$ in \cite[Proof of Theorem 6.16]{MR3051249} when the residue field's characteristic is $2$. Exercising the substitution (\ref{eqndefineY2}) and $W_1=1-2Y_1$ to (\ref{eqnextendingPhi1}) with $G_2=G_{2,\min}$ gives the following
\begin{equation*}
    \begin{split}
        1-4Y_2+4(Y_2)^2-2(1-\iu )Y_1&+4(1-\iu )Y_1Y_2-2\iu (Y_1)^2 \\&=1+4Y_1\iu\frac{G}{X\prod_{i=1}^{q_1}(X-v_i)^2} -2Y_1-2\iu\frac{G}{X\prod_{i=1}^{q_1}(X-v_i)^2}. 
    \end{split}
\end{equation*}
The equation, thanks to relation (\ref{eqnZ/2liftAStype}), simplifies to
\begin{equation}
\label{eqnminbirationalZ4liftAS}
    Y_2^2-Y_2+(1-\iu)Y_1Y_2=Y_1\iu\frac{G}{X\prod_{i=1}^{q_1}(X-v_i)^2}.
\end{equation}
One sees that (\ref{eqnminbirationalZ4liftAS}) reduces modulo $\pi$ to $y_2^2-y_2=y_1/x^{m_1}$. Thus, the special fiber of the cover defined by (\ref{eqnminbirationalZ4lift}) is birational to $\phi$.

Finally, after converting $G_{2,\min}$ to a fraction, the numerator $H_{\min}:= X\prod_{i=1}^{q_1}(X-v_i)^2-2iG \in R'[X]$ is of degree $m_1$, hence has at most $m_1$ distinct zeros, all specializing to $0$. Recall that, due to Kummer theory, all $\Phi_{\min}$'s branch points of branching index $2$ come from 
\begin{itemize}
    \item The zeros and poles of $G_{2,\min}$ of odd multiplicity and not that of $G_1$, and
    \item The zeros and poles of $G_1$ of multiplicity $2s$, where $s$ is odd.
\end{itemize}
One shows that the zeros and poles of $G_1$ of multiplicity $2s$ lie in a subset of $\{v_i\}_{i=1}^{q_1}$. In addition, it follows from Theorem \ref{theoremcaljumpirred} that $\phi_{\min}$ (resp., $\phi_1$) has jumps $(m_1, 2m_1)$ (resp., $m_1$) at its only branch point $0$. Therefore, Proposition \ref{propvanishingcycle}  asserts that the size of $\Phi_{\min}$'s (resp., $\Phi_1$'s) branch locus is at least $2m_1+1$ (resp., equal to $m_1+1$). If $\Phi_1$ is nice, i.e., $v_i=0$ for $i=1, \ldots, q_1$, then $\Phi_{\min}$'s branch points of index $2$ are precisely the zeros of $H_{\min}$ of odd multiplicity. Hence, $H_{\min}$ is separable, and $\Phi_{\min}$ has good reduction.
\end{proof}

\begin{corollary}
    \label{cormodifying}
    Suppose $f_2(x)$ in (\ref{eqnexplicitASphi}) is non-zero. Suppose, moreover, that there exists $G_2$ in $\mathbb{K}'$ such that the following conditions hold.
\begin{enumerate}[label=(\arabic*)]
        \item \label{cormodifying1} The number of $G_2$'s zeros and poles of odd multiplicity and distinct from those of $G_1$ is at most $m_2-m_1$.
        \item \label{cormodifying2} The character $\Phi' \in {\rm H}^1_2(\mathbb{K}')$ given by $G_2 \cdot G_{2, \min}$ has degeneration type 
        \begin{equation}
        \label{eqnmodifyingdegentype}
            \bigg(0, \frac{\sum_{0 \le i < n_2} a_{n_2-i} x^i}{x^{n_2}} \bigg).
        \end{equation}
    \end{enumerate}
    Then $G_2$ verifies Theorem \ref{theoremZ/4liftintowersexplicit}.
\end{corollary}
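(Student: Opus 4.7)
The plan is to realize the cover $\Phi$ defined by \eqref{eqnextendingPhi1} as the product in $H^1_4(\mathbb{K}')$ of the minimal lift $\Phi_{\min}$ from Proposition \ref{propminZ4lift} and the $\mathbb{Z}/2$-character $\Phi'$ supplied by hypothesis (2). Indeed, via Kummer theory $\Phi$ and $\Phi_{\min}$ are represented modulo fourth powers by $F_1 G_2^2$ and $F_1 G_{2,\min}^2$ respectively, so $\Phi \cdot \Phi_{\min}^{-1}$ lies in the image of the natural inclusion $H^1_2(\mathbb{K}') \hookrightarrow H^1_4(\mathbb{K}')$ and is represented by the square class of $G_2 \cdot G_{2,\min}$, which is precisely $\Phi'$.

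Using this decomposition, I would first compute the reduction of $\Phi$ with the help of Lemma \ref{lemmacombination}\ref{lemmacombination4}. Proposition \ref{propminZ4lift} gives $\sw(\Phi_{\min}) = 0$ with reduction the Witt vector $(1/x^{m_1}, 0) \in W_2(\kappa)$, while hypothesis (2) gives $\sw(\Phi') = 0$ with reduction $\tilde{f}_2 := \big(\sum_{0 \le i < n_2} a_{n_2-i}\, x^i\big)/x^{n_2} \in \kappa$; viewed as a $\mathbb{Z}/4$-character via $H^1_2 \hookrightarrow H^1_4$, this $\Phi'$ corresponds to the length-two Witt vector $(0, \tilde{f}_2)$. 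Lemma \ref{lemmacombination}\ref{lemmacombination4} then forces $\sw(\Phi) = 0$ and Witt-vector reduction $(1/x^{m_1}, 0) + (0, \tilde{f}_2) = (1/x^{m_1}, \tilde{f}_2)$, which matches the Witt vector of $\phi$ in \eqref{eqnphi}.

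Next I would upgrade this birational identification to genuine good reduction by invoking Proposition \ref{propvanishingcycle} at the unique pole $x = 0$ of the reduction, where $\phi$ has conductor $m_2 + 1$ by Theorem \ref{theoremcaljumpirred} and Definition \ref{defnconductor}. Kummer theory applied to $W_2^4 = F_1 G_2^2$ splits the generic branches of $\Phi$ into two types: branches of index $4$ at the odd-multiplicity zeros and poles of $F_1 = G_1$ (these coincide with the $m_1 + 1$ branches of $\Phi_1$, all specializing to $0$ by the good reduction of $\Phi_1$) and branches of index $2$ at the odd-multiplicity zeros and poles of $G_2$ distinct from those of $G_1$. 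Hypothesis (1) bounds the number of the latter by $m_2 - m_1$, and the good reduction of $\Phi'$ granted by hypothesis (2) (via Proposition \ref{propvanishingcycle} applied to $\Phi'$ itself) forces them to specialize to $0$. In characteristic $0$ each such branch contributes conductor $1$, so the total count is at most $(m_1 + 1) + (m_2 - m_1) = m_2 + 1$; combined with the inequality $m_2 + 1 \le (\text{this total})$ from Proposition \ref{propvanishingcycle}, equality is forced and $\Phi$ has good reduction.

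The main obstacle is the bookkeeping in the last step: one must correlate the Kummer-theoretic branching structure of $\Phi$ (which multiplicities modulo $4$ produce index-$4$ versus index-$2$ branches) with the combinatorial content of hypotheses (1) and (2), and verify that every generic branch indeed specializes to $x = 0$. Once this is done, the two inequalities collapse into a single equality and Proposition \ref{propvanishingcycle} closes the argument.
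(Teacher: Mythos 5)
Your proposal is correct and follows essentially the same route as the paper: decompose $\Phi$ as $\Phi_{\min}\cdot\Phi'$ via Kummer theory, apply Lemma \ref{lemmacombination} \ref{lemmacombination4} to identify the reduction with $\phi$'s Witt vector, and then use Proposition \ref{propvanishingcycle} to show hypothesis \ref{cormodifying1} forces the branch count to equal $m_2+1$, hence good reduction. Your version merely spells out the branch-point bookkeeping that the paper leaves implicit.
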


\begin{proof}
    Replacing $G_{2,\min}$ by $G_2$ equates to multiplying $\Phi_{\min}$ with the character generated by $G_2/G_{2,\min}$, which is in the same Kummer class of $G_{2} \cdot G_{2,\min}$,  hence isomorphic to $\Phi'$. Therefore, if $\Phi'$ has degeneration type (\ref{eqnmodifyingdegentype}), Lemma \ref{lemmacombination} \ref{lemmacombination4} indicates that $\Phi:=\Phi_{\min} \cdot \Phi'$'s special fiber is birational of $\phi$. In addition, due to Proposition \ref{propvanishingcycle}, the number of branch points of $\Phi$ with index $2$, denoted by $m$, is at least $m_2-m_1$. Recall that when $\Phi_1$ is nice, then the branch locus of $\Phi$ with index $2$ consists of the zeros and poles of $G_2$ with odd multiplicity and not those of $G_1$. Therefore, $m$ is equal to $m_2-m_1$ due to \ref{cormodifying1}.
\end{proof}

From this point on, we let $a_{j}$ denote the Teichm{\"u}ller lift of $f_2$'s coefficients $a_j$ to $W(k) \subset R'$, and $\sqrt{a_j} \in W(k)$ stand for that of $\sqrt{a_j} \in k$ (as $k$ is algebraically closed).

\subsection{When \texorpdfstring{$n_2 \le  m_1$}{}}
\label{secn2atmostm1}
In this case, $q_2 \le q_1$ and $m_2=2m_1$. The number of points with branching index $2$ is thus $m_1$.

\begin{proposition}
\label{propcaseq2smallerq1}
When $n_2 \le m_1$, Theorem \ref{theoremZ/4liftintowersexplicit} holds with $G_2$ is of the form
\begin{equation}
    \label{eqnZ4liftn2smallq2<q1}
    G_2=1- 2\iu\frac{G}{X\prod_{i=1}^{q_1}(X-v_i)^2} +4\frac{\sum_{0 \le i <n_2} a_{n_2-i} X^i}{X\prod_{j=1}^{q_2}(X-v_j)^2} \in \mathbb{K}' .
\end{equation}  
\end{proposition}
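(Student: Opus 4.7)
The plan is to verify the two hypotheses of Corollary \ref{cormodifying} for the $G_2$ given in (\ref{eqnZ4liftn2smallq2<q1}); the proposition then follows immediately.

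For condition \ref{cormodifying1}, I would combine the summands of $G_2$ over the common denominator $X\prod_{i=1}^{q_1}(X-v_i)^2$, obtaining $G_2 = N_{G_2}/(X\prod_{i=1}^{q_1}(X-v_i)^2)$ with
\begin{equation*}
    N_{G_2} = X\prod_{i=1}^{q_1}(X-v_i)^2 - 2\iu G + 4\sum_{0 \le i < n_2} a_{n_2-i}\, X^i \prod_{j=q_2+1}^{q_1}(X-v_j)^2.
\end{equation*}
The first summand is monic of degree $m_1 = 2q_1+1$; the second has degree at most $2q_1$, since the reduced form of $\Phi_1$ produced in Proposition \ref{proprepresentationoforder2} forces $\deg G \le 2q_1$; and the third has degree $(n_2-1)+2(q_1-q_2)=2q_1$. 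Hence $N_{G_2}$ is monic of degree $m_1$, so it contributes at most $m_1 = m_2-m_1$ odd-multiplicity zeros, and the only odd-multiplicity pole of $G_2$ is $X=0$, which already lies in the branch locus of $\Phi_1$. This verifies \ref{cormodifying1}.

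For condition \ref{cormodifying2}, observe that $G_2 = G_{2,\min} + N$ with
\begin{equation*}
    N := 4\cdot\frac{\sum_{0\le i<n_2} a_{n_2-i}\,X^i}{X\prod_{j=1}^{q_2}(X-v_j)^2}.
\end{equation*}
Modulo squares, $G_2 \cdot G_{2,\min}$ lies in the same Kummer class as $F := G_2/G_{2,\min} = 1 + N/G_{2,\min}$, so $\Phi'$ is represented by $F$. I would apply Theorem \ref{theoremswanorderpmixedchar} with trial $H=1$: since $G_{2,\min} \equiv 1 \pmod{\pi}$ and the rational factor of $N/4$ has vanishing Gauss valuation, $\nu_{\mathbb{K}'}(F-1) = \nu_{\mathbb{K}'}(N) = 2 = p/(p-1)$. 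Reducing $(F-1)/4 = N/(4G_{2,\min})$ modulo $\pi$: the $v_j$ specialize to $0$, $G_{2,\min}$ reduces to $1$, and the Teichm{\"u}ller lifts $a_{n_2-i}$ reduce to the original coefficients in $k$, so the residue is exactly $\sum_{0\le i<n_2} a_{n_2-i}\, x^i/x^{n_2} = f_2(x)$. Because $f_2$ has an odd-order pole of order $n_2$ at $x=0$ (as $a_{n_2} \ne 0$), it is not a square in $k(x)$, so $H = 1$ is a valid correcting character. Theorem \ref{theoremswanorderpmixedchar} then yields $\sw(\Phi') = 0$ with reduction given by the Artin-Schreier cover $y^2-y=f_2(x)$, confirming the degeneration type $(0, f_2(x))$ required by \ref{cormodifying2}. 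Applying Corollary \ref{cormodifying} completes the proof.

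The main obstacle is the residue calculation for condition \ref{cormodifying2}: one must carefully track how $G_{2,\min}$, the specializations $v_j \to 0$, and the Teichm{\"u}ller lifts $a_{n_2-i}$ interact so that the residue of $N/(4G_{2,\min})$ is precisely $f_2(x)$, with the correct odd-order pole structure that both produces a valid correcting character and matches the Artin-Schreier tail of $\phi$ after combination with $\phi_{\min}$ via Lemma \ref{lemmacombination}\ref{lemmacombination4}.
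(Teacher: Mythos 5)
Your proof is correct and follows essentially the same route as the paper: verify the two hypotheses of Corollary \ref{cormodifying}, using Theorem \ref{theoremswanorderpmixedchar} for the degeneration type and a degree count of the numerator for the branch-point bound. The only (cosmetic) difference is that you represent $\Phi'$ by $G_2/G_{2,\min}$ with correcting character $H=1$, whereas the paper expands $G_2\cdot G_{2,\min}$ and takes $H=G_{2,\min}$; since the two representatives differ by the square $G_{2,\min}^2$, the computations coincide, and your explicit verification of condition \ref{cormodifying1} fills in a step the paper only asserts.
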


\begin{proof}
One shows that
\begin{equation*}
\begin{split}
     G_2 \cdot G_{2, \min}=&\bigg(1- 2\iu\frac{G}{X\prod_{i=1}^{q_1}(X-v_i)^2}\bigg)^2+4\frac{\sum_{0 \le i <n_2} a_{n_2-i} X^i}{X\prod_{j=1}^{q_2}(X-v_j)^2} \\ &-8\iu\frac{G(\sum_{0 \le j <n_2} a_{n_2-j} X^j)}{X^2\prod_{l=1}^{q_2}(X-v_l)^4 \prod_{m=q_2+1}^{q_1}(X-v_m)^2}.
\end{split}
\end{equation*}
Therefore, $\Phi':=\mathfrak{K}_2(G_2 \cdot G_{2, \min})$ has degeneration type (\ref{eqnmodifyingdegentype}) due to Theorem \ref{theoremswanorderpmixedchar}, where the correcting fuction is $H=1- 2i\frac{G}{X\prod_{i=1}^{q_1}(X-v_i)^2}$. In addition, the numerator of $G_2$ has degree $2q_1+1=m_1$, which implies that $\Phi$ has good reduction isomorophic to $\phi$ by Corollary \ref{cormodifying}.
\end{proof}

\subsection{When \texorpdfstring{$n_2>m_1$}{}}
\label{secn2greaterm1}
We have two sub-cases. When $m_1<n_2<2m_1$, we also have $m_2=2m_1$, hence, the number of points with branching index $2$ is still $m_1$. When $n_2>2m_1$, that amount is $2(q_2-q_1)$.

As before, we would like to forge a rational $G_2$ whose zeros and poles that contribute to the branch locus lie in the numerator, which should be a polynomial of degree $m_1$. One will see that it requires more effort to come up with the function compared to the case $q_2 \le q_1$. We will discuss a non-archimedean geometry interpretation of the construction in a forthcoming manuscript.

\begin{proposition}
\label{propcaseq2greatq1}
When $q_2>q_1$, Theorem \ref{theoremZ/4liftintowersexplicit} holds for the following
\begin{equation}
    \label{eqnZ4liftn2>n_1}
    \begin{split}
    G_2= &G_{2,\min}+2\frac{\sum_{0 \le i <n_2 -m_1} a_{n_2-i} X^i}{X^{\alpha}\prod_{j=1}^{\beta}(X-v_j)^2 }   +4\sum_{n_2-m_1 \le i <n_2} \frac{a_{n_2-i}}{X\prod_{j=1}^{q_2-i/2}(X-v_j)^2}  \\
    & + 2\sqrt{2} \frac{\sum_{0 \le i <n_2 -m_1} \sqrt{a_{n_2-i}} X^{i/2}}{X^{\alpha/2}\prod_{j=1}^{\beta}(X-v_j)} +4  \frac{\sum_{0 \le i<l <n_2 -m_1} \sqrt{a_{n_2-i}}\sqrt{a_{n_2-l}} X^{(i+l)/2}}{X^{\alpha}\prod_{m=1}^{\beta}(X-v_m)^2 }.
    \end{split}
\end{equation}  
where $\alpha  =\max \{0, 2(q_2-2q_1)\}$, and $\beta=q_2-q_1-\alpha/2$, hence is either $q_2-q_1$ or $q_1$.
\end{proposition}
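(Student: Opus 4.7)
The plan is to invoke Corollary \ref{cormodifying}, checking both of its hypotheses for the explicit $G_2$ in (\ref{eqnZ4liftn2>n_1}). For readability I will first introduce the shorthand
\begin{align*}
A &:= \frac{G}{X\prod_{i=1}^{q_1}(X-v_i)^2}, & B &:= \frac{\sum_{0 \le i < n_2-m_1} a_{n_2-i}X^i}{X^{\alpha}\prod_{j=1}^{\beta}(X-v_j)^2}, \\
C &:= \sum_{n_2-m_1 \le i < n_2} \frac{a_{n_2-i}}{X\prod_{j=1}^{q_2-i/2}(X-v_j)^2}, & D &:= \frac{\sum_{0 \le i < n_2-m_1} \sqrt{a_{n_2-i}}\,X^{i/2}}{X^{\alpha/2}\prod_{j=1}^{\beta}(X-v_j)},
\end{align*}
and let $E$ denote the final cross-term in $G_2$. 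A direct expansion yields the identity $D^2 = B + 2E$, whence the key algebraic decomposition
\[
G_2 = (1+\sqrt{2}\,D)^2 - 2\iu A + 4C,
\]
which explains the unusual $2\sqrt{2}$ and $4$ coefficients in (\ref{eqnZ4liftn2>n_1}).

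For hypothesis \ref{cormodifying1}, I will clear the common denominator of $G_2$. Using $\alpha+2\beta = 2(q_2-q_1)$ in both sub-cases ($\alpha=0$ with $\beta=q_2-q_1$ when $q_1 < q_2 \le 2q_1$, and $\alpha=2(q_2-2q_1)$ with $\beta=q_1$ when $q_2 > 2q_1$), a degree count on the resulting numerator bounds the number of zeros outside $\{0,v_1,\dots,v_{q_1}\}$ by $m_2 - m_1$ in every case.

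For hypothesis \ref{cormodifying2}, I propose the correcting character $H := 1 + \sqrt{2}\,D$, for which $H^2 = 1 + 2\sqrt{2}\,D + 2D^2 = 1 + 2\sqrt{2}\,D + 2B + 4E$. Multiplying out $G_2 \cdot G_{2,\min}$ with $G_{2,\min} = 1 - 2\iu A$ via the decomposition above and subtracting $H^2$, one obtains
\[
G_2 \cdot G_{2,\min} - H^2 = -4\iu A - 4A^2 + 4C - 4\iu AB - 8\iu AC - 4\sqrt{2}\,\iu AD - 8\iu AE,
\]
in which every term has Gauss valuation at least $2$. Dividing by $4$ and reducing modulo $\mathfrak{m}_{R'}$, noting that $\iu$ specializes to $1$ in characteristic $2$ (since $\iu^2 = -1 = 1$), the only surviving contributions are
\[
[A] + [A]^2 + [C] + [A][B].
\]
A term-by-term pole-order match, using $[A] = 1/x^{m_1}$ and the reductions of $B$ and $C$, then shows $[C] + [A][B] = f_2(x)$, while $[A] + [A]^2 = \wp([A])$ is Artin--Schreier trivial. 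Since $f_2(x)$ contains the nonzero term $a_0/x^{n_2}$ with $n_2$ odd, it is not a square in $\kappa$, and Theorem \ref{theoremswanorderpmixedchar} identifies the degeneration type of $\Phi' = \mathfrak{K}_2(G_2 \cdot G_{2,\min})$ as $(0, f_2(x))$, as required by (\ref{eqnmodifyingdegentype}).

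The main obstacle I anticipate is the pole-order bookkeeping establishing $[C] + [A][B] = f_2(x)$. The index range $0 \le i < n_2$ splits into $0 \le i < n_2 - m_1$ (treated by $[A][B]$, contributing pole orders in $(m_1, n_2]$) and $n_2-m_1 \le i < n_2$ (treated by $[C]$, contributing pole orders in $(0, m_1]$); one must verify uniformly across both sub-cases that $\alpha + 2\beta = n_2 - m_1$ makes the reduction of $B$'s denominator equal to $x^{n_2-m_1}$ so that the $a_{n_2-i}$ coefficients align. The purpose of the elaborate $\sqrt{2}\,D$ and $4E$ terms in (\ref{eqnZ4liftn2>n_1}) is precisely to force $G_2 - (-2\iu A + 4C)$ to be a perfect square, so that despite $\sqrt{2}$ carrying Gauss valuation $1/2$, the correcting character $H = 1+\sqrt{2}\,D$ absorbs all low-valuation contributions to $G_2\cdot G_{2,\min} - 1$ in one stroke.
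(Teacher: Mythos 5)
Your proposal is correct and follows essentially the same route as the paper: both hypotheses of Corollary \ref{cormodifying} are checked exactly as there, the first by the degree count $\alpha+2\beta=2(q_2-q_1)$ on the common denominator, the second by exhibiting a correcting character built around the square of $1+\sqrt{2}D$ and invoking Theorem \ref{theoremswanorderpmixedchar}. The only (harmless) deviation is that the paper's correcting character also absorbs the $-2\iu A$ term, so its reduction is $f_2$ on the nose, whereas your $H=1+\sqrt{2}D$ leaves the extra summand $\wp(1/x^{m_1})$ (note the non-square condition should be checked for the full $g=\wp(1/x^{m_1})+f_2$, not just $f_2$, which holds since $m_1<n_2$ are both odd), and you correctly discharge this via ASW-equivalence.
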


\begin{proof}
A straight-forward computation shows that
\begin{equation}
\label{eqnq2>q1}
\begin{split}
    G_2&\cdot G_{2,\min} - \bigg(1-2\iu\frac{G}{X\prod_{i=1}^{q_1}(X-v_i)^2} +\sqrt{2}\iu \frac{\sum_{0 \le i <n_2 -m_1} \sqrt{a_{n_2-i}} X^{i/2}}{X^{\alpha/2}\prod_{j=1}^{\beta}(X-v_j) }  \bigg)^2 \\
    = & -4\iu\frac{G(\sum_{0 \le i <n_2 -m_1} a_{n_2-i} X^i)}{X^{\alpha+1}\prod_{l=1}^{\beta}(X-v_l)^2\prod_{j=1}^{q_1}(X-v_j)^2}  + 4\sum_{n_2-m_1 \le i <n_2} \frac{a_{n_2-i}}{X\prod_{j=1}^{q_2-i/2}(X-v_j)^2}  \\ & + \text{terms with $\nu_{\mathbb{K}'}$-valuations strictly greater than $2$.}     \\
\end{split}
\end{equation}
It hence follows from Theorem \ref{theoremswanorderpmixedchar} that the admissible character $\Phi'$ associated to $G_2 \cdot G_{2, \min}$ has degeneration type (\ref{eqnmodifyingdegentype}). In addition, the common denominator of (\ref{eqnZ4liftn2>n_1}) is 
\[ \begin{cases} 
      X^{\alpha}\prod_{i=1}^{\beta}(X-v_i)^2=X^{2(q_2-2q_1)} \prod_{i=1}^{q_1}(X-v_i)^2,  & q_2>2q_1 \\
      X \prod_{i=1}^{q_1}(X-v_i)^2,  & q_1<q_2 \le 2q_1   \\
   \end{cases}.
\]
Therefore, after converting $G_2$ to a fraction, the numerator is a polynomial in $R'[X]$ of degree 
\[ \begin{cases} 
      2(q_2-q_1),  & q_2>2q_1 \\
      2q_1+1,  & q_1<q_2 \le 2q_1   \\
   \end{cases}.
\]
The rest is instantaneous from Corollary \ref{cormodifying}.
\end{proof}

That completes the proof of Theorem \ref{theoremZ/4liftintowersexplicit}, hence Theorem \ref{theoremlifttowerz4}. \qed

\begin{remark}
    A straight forward application of Theorem \ref{theoremswanorderpmixedchar} yields that both of the $G_2$ in Proposition \ref{propcaseq2smallerq1}, Proposition \ref{propcaseq2greatq1}, and $G_{2,\min}$, all regarded as order-two-characters, have depth Swan $2$ and differential Swan $dx/x^{m_1+1}$. One can also deduce from Lemma \ref{lemmacombination} \ref{lemmacombination3} that the $\mathbb{Z}/4$-character defined by (\ref{eqnreducedZ/2liftonebranch0}) and $W_2^2=W_1$ has the exact same degeneration type.
\end{remark}

\bibliographystyle{alpha}
\bibliography{mybib}

\begin{thebibliography}{{Dan}20b}

\bibitem[BM00]{MR1767273}
Jos\'e Bertin and Ariane M\'ezard.
\newblock D\'eformations formelles des rev\^etements sauvagement ramifi\'es de
  courbes alg\'ebriques.
\newblock {\em Invent. Math.}, 141(1):195--238, 2000.

\bibitem[Dan20a]{DANG2020398}
Huy Dang.
\newblock Connectedness of the moduli space of {A}rtin-{S}chreier curves of
  fixed genus.
\newblock {\em Journal of Algebra}, 547:398 -- 429, 2020.

\bibitem[{Dan}20b]{2020arXiv201013614D}
Huy {Dang}.
\newblock {Deforming cyclic covers in towers}.
\newblock {\em arXiv e-prints}, page arXiv:2010.13614, October 2020.

\bibitem[{Dan}20c]{2020arXiv200203719D}
Huy {Dang}.
\newblock Hurwitz trees and deformations of {A}rtin-{S}chreier covers.
\newblock {\em arXiv e-prints}, page arXiv:2002.03719, February 2020.

\bibitem[Epp73]{MR0321929}
Helmut~P. Epp.
\newblock Eliminating wild ramification.
\newblock {\em Invent. Math.}, 19:235--249, 1973.

\bibitem[Gar96]{MR1424559}
Marco~A. Garuti.
\newblock Prolongement de rev\^{e}tements galoisiens en g\'{e}om\'{e}trie
  rigide.
\newblock {\em Compositio Math.}, 104(3):305--331, 1996.

\bibitem[Gar02]{MR1935414}
Marco~A. Garuti.
\newblock Linear systems attached to cyclic inertia.
\newblock In {\em Arithmetic fundamental groups and noncommutative algebra
  ({B}erkeley, {CA}, 1999)}, volume~70 of {\em Proc. Sympos. Pure Math.}, pages
  377--386. Amer. Math. Soc., Providence, RI, 2002.

\bibitem[GM98]{MR1645000}
Barry Green and Michel Matignon.
\newblock Liftings of {G}alois covers of smooth curves.
\newblock {\em Compositio Math.}, 113(3):237--272, 1998.

\bibitem[GM99]{MR1630112}
Barry Green and Michel Matignon.
\newblock Order {$p$} automorphisms of the open disc of a {$p$}-adic field.
\newblock {\em J. Amer. Math. Soc.}, 12(1):269--303, 1999.

\bibitem[Har80]{MR579791}
David Harbater.
\newblock Moduli of {$p$}-covers of curves.
\newblock {\em Comm. Algebra}, 8(12):1095--1122, 1980.

\bibitem[Kat86]{MR867916}
Nicholas~M. Katz.
\newblock Local-to-global extensions of representations of fundamental groups.
\newblock {\em Ann. Inst. Fourier (Grenoble)}, 36(4):69--106, 1986.

\bibitem[Kat87]{MR904945}
Kazuya Kato.
\newblock Vanishing cycles, ramification of valuations, and class field theory.
\newblock {\em Duke Math. J.}, 55(3):629--659, 1987.

\bibitem[Lor08]{MR2371763}
Falko Lorenz.
\newblock {\em Algebra. {V}ol. {II}}.
\newblock Universitext. Springer, New York, 2008.
\newblock Fields with structure, algebras and advanced topics, Translated from
  the German by Silvio Levy, With the collaboration of Levy.

\bibitem[MR203]{MR2017446}
{\em Rev\^{e}tements \'{e}tales et groupe fondamental ({SGA} 1)}, volume~3 of
  {\em Documents Math\'{e}matiques (Paris) [Mathematical Documents (Paris)]}.
\newblock Soci\'{e}t\'{e} Math\'{e}matique de France, Paris, 2003.
\newblock S\'{e}minaire de g\'{e}om\'{e}trie alg\'{e}brique du Bois Marie
  1960--61. [Algebraic Geometry Seminar of Bois Marie 1960-61], Directed by A.
  Grothendieck, With two papers by M. Raynaud, Updated and annotated reprint of
  the 1971 original [Lecture Notes in Math., 224, Springer, Berlin; MR0354651
  (50 \#7129)].

\bibitem[MR319]{MR3971540}
{\em Open problems in arithmetic algebraic geometry}, volume~46 of {\em
  Advanced Lectures in Mathematics (ALM)}.
\newblock International Press, Somerville, MA; Higher Education Press, Beijing,
  [2019] \copyright 2019.
\newblock Edited and with appendices by Frans Oort.

\bibitem[Neu99]{MR1697859}
J\"{u}rgen Neukirch.
\newblock {\em Algebraic number theory}, volume 322 of {\em Grundlehren der
  Mathematischen Wissenschaften [Fundamental Principles of Mathematical
  Sciences]}.
\newblock Springer-Verlag, Berlin, 1999.
\newblock Translated from the 1992 German original and with a note by Norbert
  Schappacher, With a foreword by G. Harder.

\bibitem[Obu12]{MR3051249}
Andrew Obus.
\newblock The (local) lifting problem for curves.
\newblock In {\em Galois-{T}eichm\"uller theory and arithmetic geometry},
  volume~63 of {\em Adv. Stud. Pure Math.}, pages 359--412. Math. Soc. Japan,
  Tokyo, 2012.

\bibitem[OP10]{MR2577662}
Andrew Obus and Rachel Pries.
\newblock Wild tame-by-cyclic extensions.
\newblock {\em J. Pure Appl. Algebra}, 214(5):565--573, 2010.

\bibitem[OW14]{MR3194815}
Andrew Obus and Stefan Wewers.
\newblock Cyclic extensions and the local lifting problem.
\newblock {\em Ann. of Math. (2)}, 180(1):233--284, 2014.

\bibitem[Pop14]{MR3194816}
Florian Pop.
\newblock The {O}ort conjecture on lifting covers of curves.
\newblock {\em Ann. of Math. (2)}, 180(1):285--322, 2014.

\bibitem[Pri03]{MR2016596}
Rachel~J. Pries.
\newblock Conductors of wildly ramified covers. {III}.
\newblock {\em Pacific J. Math.}, 211(1):163--182, 2003.

\bibitem[Sa{\"i}12]{MR3051252}
Mohamed Sa{\"i}di.
\newblock Fake liftings of {G}alois covers between smooth curves.
\newblock In {\em Galois-{T}eichm{u}ller theory and arithmetic geometry},
  volume~63 of {\em Adv. Stud. Pure Math.}, pages 457--501. Math. Soc. Japan,
  Tokyo, 2012.

\end{thebibliography}

\end{document}